\newcommand{\doi}[1]{\href{http://dx.doi.org/#1}{doi:\texttt{#1}}}
\newcommand{\urlprefix}{}
\theoremstyle{plain}
\newtheorem{theorem}{Theorem}[section]
\newtheorem{lemma}[theorem]{Lemma}
\newtheorem{corollary}[theorem]{Corollary}
\theoremstyle{definition}
\renewcommand{\leq}{\leqslant}
\renewcommand{\geq}{\geqslant}
\newcommand{\DEF}[1]{\textsl{#1}}       
\newcommand{\floor}[1]{\ensuremath{\protect\lfloor#1\rfloor}}
\newcommand{\G}{\mathcal{G}}
\DeclareMathOperator{\tw}{tw}
\DeclareMathOperator{\nc}{nc}
\DeclareMathOperator{\bd}{bd}
\newcommand{\lex}[2]{#1[#2]}
\newcommand{\LexVertex}[2]{#1^{(#2)}}
\newcommand{\ang}[1]{\langle{#1}\rangle}
\newcommand{\wminor}[1]{$#1$-minor} 
\newcommand{\wmodel}[1]{$#1$-model} 
\newcommand{\f}[3]{48 #1 \sqrt{#2 + #3}}  
\newcommand{\case}[2]{{\smallskip\bf Case~#1: #2}} 
\begin{document}

\title[Complete Graph Minors and the Graph Minor Structure
Theorem]{Complete Graph Minors and\\ the Graph Minor Structure
  Theorem}

\author{Gwena\"el Joret} \address{\newline D\'epartement
  d'Informatique \newline Universit\'e Libre de Bruxelles \newline
  Brussels, Belgium} \email{gjoret@ulb.ac.be}

\author{David~R.~Wood} \address{\newline Department of Mathematics and
  Statistics \newline The University of Melbourne \newline Melbourne,
  Australia} \email{woodd@unimelb.edu.au}

\thanks{\textbf{MSC Classification}: graph minors 05C83, topological
  graph theory 05C10}

\thanks{This work was supported in part by the Actions de Recherche
  Concert\'ees (ARC) fund of the Communaut\'e fran\c{c}aise de
  Belgique.  Gwena\"el Joret is a Postdoctoral Researcher of the Fonds
  National de la Recherche Scientifique (F.R.S.--FNRS).  The research
  of David Wood is supported by a QEII Research Fellowship and a
  Discovery Project from the Australian Research Council.}

\date{\today}

\begin{abstract}
  The graph minor structure theorem by Robertson and Seymour shows
  that every graph that excludes a fixed minor can be constructed by a
  combination of four ingredients: graphs embedded in a surface of
  bounded genus, a bounded number of vortices of bounded width, a
  bounded number of apex vertices, and the clique-sum operation. This
  paper studies the converse question: What is the maximum order of a
  complete graph minor in a graph constructed using these four
  ingredients? Our main result answers this question up to a constant
  factor.
\end{abstract}

\maketitle


\section{Introduction}
\label{sec:Intro}

\citet{RS-GraphMinorsXVI-JCTB03} proved a rough structural
characterization of graphs that exclude a fixed minor. It says that such a
graph can be constructed by a combination of four ingredients: graphs
embedded in a surface of bounded genus, a bounded number of vortices
of bounded width, a bounded number of apex vertices, and the
clique-sum operation. Moreover, each of these ingredients is
essential.

In this paper, we consider the converse question: What is the maximum
order of a complete graph minor in a graph constructed using these
four ingredients? Our main result answers this question up to a
constant factor.

To state this theorem, we now introduce some notation; see
Section~\ref{sec:Definitions} for precise definitions.  For a graph
$G$, let $\eta(G)$ denote the maximum integer $n$ such that the
complete graph $K_{n}$ is a minor of $G$, sometimes called the 
\DEF{Hadwiger number} of $G$.  For integers $g,p,k\geq0$, let $\G(g,p,k)$
be the set of graphs obtained by adding at most $p$ vortices, each
with width at most $k$, to a graph embedded in a surface of Euler
genus at most $g$.  For an integer $a\geq0$, let $\G(g,p,k,a)$ be the
set of graphs $G$ such that $G \setminus A\in\G(g,p,k)$ for some set
$A\subseteq V(G)$ with $|A|\leq a$. The vertices in $A$ are called
\DEF{apex} vertices.  Let $\G(g,p,k,a)^+$ be the set of graphs
obtained from clique-sums of graphs in $\G(g,p,k,a)$.

The graph minor structure theorem of \citet{RS-GraphMinorsXVI-JCTB03}
says that for every integer $t\geq1$, there exist integers
$g,p,k,a\geq0$, such that every graph $G$ with $\eta(G)\leq t$ is in
$\G(g,p,k,a)^+$. We prove the following converse result.

\begin{theorem}
  \label{thm:Main}
  For some constant $c>0$, for all integers $g,p,k,a\geq0$, for every
  graph $G$ in $\G(g,p,k,a)^+$,
$$\eta(G)\leq a+c(k+1)\sqrt{g + p}+c\enspace.$$
Moreover, for some constant $c'>0$, for all integers $g,a\geq0$ and
$p\geq1$ and $k\geq2$, there is a graph $G$ in $\G(g,p,k,a)$ such
that $$\eta(G)\geq a+c'k\sqrt{g + p}\enspace.$$
\end{theorem}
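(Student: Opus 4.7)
The theorem consists of an upper bound on $\eta(G)$ for $G \in \G(g,p,k,a)^+$ and a matching construction. The plan is to handle each in turn.

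For the upper bound, the strategy is to strip the ingredients away in order. First, clique-sums do not increase $\eta$: by a standard argument, any $K_n$-minor in a clique-sum $G_1 \cup_C G_2$ embeds into one of the summands (branch sets that straddle the cut must absorb $C$, leaving a $K_n$-model entirely in one side), so $\eta(G) = \max\{\eta(G_1),\eta(G_2)\}$, reducing the problem to graphs in $\G(g,p,k,a)$. Next, apex vertices: since at most $|A| \leq a$ branch sets of any minor model meet $A$, deleting them yields $\eta(G) \leq \eta(G \setminus A) + a$. This leaves the main task of proving $\eta(H) = O((k+1)\sqrt{g+p+1})$ for $H \in \G(g,p,k)$.

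For the main step, my plan is to set up a careful edge-counting argument. Given a $K_n$-model in $H$, contracting its branch sets produces $K_n$ as a subgraph of a graph still in $\G(g,p,O(k))$. The edges of $K_n$ split into those drawn in the surface and those lying inside vortices. Euler's formula on the genus-$g$ surface bounds the surface edges by $O(n+g)$. A vortex of width $k$ on a face of length $\ell$ is a graph of pathwidth $O(k)$ with $O(k\ell)$ vertices; combined with $\sum_i \ell_i = O(n+g)$ by Euler and a careful analysis of how branch sets can occupy vortex bags, one should bound the total vortex contribution to $\binom{n}{2}$ by $O(k^2(g+p) + kn)$. Solving $\binom{n}{2} = O(n + g + pk^2 + kn)$ then yields $n = O((k+1)\sqrt{g+p+1})$.

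For the lower bound, take $a$ apex vertices joined to every other vertex (contributing $a$ to $\eta$); the task reduces to constructing $H \in \G(g,p,k)$ with $\eta(H) = \Omega(k\sqrt{g+p})$. The starting point is that the surface of Euler genus $g+p$ carries a near-triangulation realizing a $K_m$-minor with $m = \Omega(\sqrt{g+p})$. The plan is to trade $p$ topological features (handles or crosscaps) of this surface for $p$ vortices of width $k$, while simultaneously replacing each branch set of the minor model by $k$ parallel copies routed through the vortices. A width-$k$ vortex can carry $k$ parallel bundles of paths between its boundary bags, providing the cross-connections that preserve and $k$-fold thicken the minor structure, yielding a $K_{\Omega(km)}=K_{\Omega(k\sqrt{g+p})}$-minor in $\G(g,p,k)$.

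The main obstacle in both directions is the vortex analysis. The upper-bound edge-counting must avoid losing an extra factor of $k$ when summing over vortex contributions, and the lower-bound construction must align the vortex widths with the surface topology to exploit both resources fully without losing a factor of $\sqrt{k}$. The clique-sum reduction, the apex-vertex reduction, and the classical Heawood--Ringel--Youngs surface bound $\eta=O(\sqrt{g+1})$ are by comparison routine.
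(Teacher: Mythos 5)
Your high-level outline (clique-sums $\to$ apex $\to$ almost-embeddable core; constructions boosting a surface minor by a factor $k$ via vortices) matches the paper's, and the clique-sum and apex reductions are exactly as in Lemma~\ref{lem:CliqueSum} and Corollary~\ref{cor:CliqueMinor}. But the central step of your upper bound has a genuine gap. You assert that contracting the branch sets of a $K_n$-model in $H\in\G(g,p,k)$ yields $K_n$ as a subgraph of a graph ``still in $\G(g,p,O(k))$'' and then count edges. That first claim does not hold: a branch set can intersect the interiors of two distinct vortices (its connecting path runs through $G_0$), and after contraction the resulting single vertex would have to lie on two disjoint vortex discs simultaneously, destroying the almost-embeddable structure; likewise a branch set that straddles a vortex boundary and surrounding surface is not easily re-placed. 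Even if you restrict attention to models that do not do this, the raw edge count on the contracted graph gives at best $n=O(k+\sqrt{g}+k\sqrt{p})$ \emph{only if} every $K_n$-vertex ends up a perimeter or surface vertex, and making that precise (and ruling out branch sets confined to vortex interiors, or bounding $|V(G_0)|$) is precisely the hard part. The paper avoids contraction altogether: it passes to $(H,k{+}1)$-models (Lemma~\ref{lem:lex_prod_k_minor}), projects each branch set onto the vortex perimeters via the bag structure, reduces to a $(g,p)$-embedded graph of maximum degree~3 in which every vertex lies on a cuff boundary (Lemma~\ref{lem:combined_lemma}), and then runs a delicate induction on homotopy classes of non-cuff edges (Lemmas~\ref{lem:equivalence}--\ref{lem:main_tool}); the Euler-formula edge count you have in mind occurs only in the final Case~5 of that induction, \emph{after} all contractible non-cuff edges and all non-trivial homotopy classes have been eliminated, which is what makes the count tight.

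Your lower-bound sketch is closer in spirit but not in mechanism. The paper does not trade handles/crosscaps of a genus-$(g+p)$ surface for vortices. Instead it proves Lemma~\ref{lem:ConstructVortex}: if $G$ embeds in genus $g$ with $p$ disjoint facial cycles covering $V(G)$, then $\lex{G}{k}$ is a minor of some graph in $\G(g,p,k)$ (this is the rigorous version of your ``$k$-fold thickening via vortices''). It then instantiates this with two concrete families: a vertex-deleted $K_m$ triangulating a surface of Euler genus roughly $g$ with a single face through all vertices (giving $\eta\gtrsim k\sqrt{g}$ with one vortex, handling $g\geq p$), and the planar $2m\times2m$ grid with its $m^2$ quadrilateral faces (giving $\eta\gtrsim k\sqrt{p}$, handling $p>g$). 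Your proposed construction would need to specify which faces host the vortices and how the $k$ parallel copies are routed; as written, a ``near-triangulation of genus $g+p$'' does not obviously admit $p$ disjoint suitable faces, and Ringel--Youngs is used in the paper only to control the genus of the single-vortex construction, not to create $p$ of them.
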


Let $\textup{RS}(G)$ be the minimum integer $k$ such that $G$ is a
subgraph of a graph in $\G(k,k,k,k)^+$.  The graph minor structure
theorem \citep{RS-GraphMinorsXVI-JCTB03} says that $\textup{RS}(G)\leq
f(\eta(G))$ for some function $f$ independent of $G$. Conversely,
Theorem~\ref{thm:Main} implies that $\eta(G)\leq f'(\textup{RS}(G))$
for some (much smaller) function $f'$.  In this sense, $\eta$ and
$\textup{RS}$ are ``tied''. Note that such a function $f'$ is widely
understood to exist (see for instance Diestel~\cite[p.~340]{Diestel05}
and Lov\'asz~\cite{LovaszSurvey}). However, the authors are not aware
of any proof.  In addition to proving the existence of $f'$, this
paper determines the best possible function $f'$ (up to a constant
factor).

Following the presentation of definitions and other preliminary
results in Section~\ref{sec:Definitions}, the proof of the upper and
lower bounds in Theorem~\ref{thm:Main} are respectively presented in
Sections~\ref{sec:UpperBound} and \ref{sec:Constructions}.

\section{Definitions and Preliminaries}
\label{sec:Definitions}

All graphs in this paper are finite and simple, unless otherwise
stated.  Let $V(G)$ and $E(G)$ denote the vertex and edge sets of a
graph $G$. For background graph theory see \citep{Diestel05}.


A graph $H$ is a \DEF{minor} of a graph $G$ if $H$ can be obtained
from a subgraph of $G$ by contracting edges. (Note that, since we only
consider simple graphs, loops and parallel edges created during an
edge contraction are deleted.)  An \DEF{$H$-model} in $G$ is a
collection $\{S_{x}: x\in V(H)\}$ of pairwise vertex-disjoint
connected subgraphs of $G$ (called \DEF{branch sets}) such that, for
every edge $xy \in E(H)$, some edge in $G$ joins a vertex in $S_{x}$
to a vertex in $S_{y}$.  Clearly, $H$ is a minor of $G$ if and only if
$G$ contains an $H$-model. For a recent survey on graph minors see
\citep{KM-GC07}.

Let $\lex{G}{k}$ denote the \DEF{lexicographic product} of $G$ with
$K_{k}$, namely the graph obtained by replacing each vertex $v$ of $G$
with a clique $C_{v}$ of size $k$, where for each edge $vw\in E(G)$,
each vertex in $C_v$ is adjacent to each vertex in $C_w$. Let $\tw(G)$
be the treewidth of a graph $G$; see \cite{Diestel05} for background
on treewidth.



\begin{lemma}
  \label{lem:treewidth}
  For every graph $G$ and integer $k\geq 1$, every minor of
  $\lex{G}{k}$ has minimum degree at most $k \cdot \tw(G)+ k - 1$.
\end{lemma}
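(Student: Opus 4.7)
The plan is to reduce the statement to a treewidth bound on $\lex{G}{k}$, then use standard minor-monotonicity and the classical fact that bounded treewidth implies a low-degree vertex.

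First I would establish that $\tw(\lex{G}{k}) \leq k \cdot \tw(G) + k - 1$. Starting from a tree decomposition $(T, (B_t)_{t \in V(T)})$ of $G$ of width $\tw(G)$, I would inflate each bag by replacing every vertex $v \in B_t$ with the $k$ vertices of its associated clique $C_v$ in $\lex{G}{k}$. The resulting family of bags, indexed by the same tree $T$, still satisfies the three tree-decomposition axioms: each inflated vertex appears in exactly the bags corresponding to nodes containing its original, and for every edge of $\lex{G}{k}$ — whether it lies inside a single clique $C_v$ or between adjacent cliques $C_v, C_w$ — the pair of endpoints is covered because $v$ (respectively $v$ and $w$) lie together in some bag of the original decomposition. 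Each inflated bag has size at most $k(\tw(G)+1)$, giving width at most $k \cdot \tw(G) + k - 1$.

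Next I would invoke the well-known monotonicity property that taking minors cannot increase treewidth, so any minor $H$ of $\lex{G}{k}$ satisfies $\tw(H) \leq k \cdot \tw(G) + k - 1$. Finally, I would use the standard fact that every graph of treewidth at most $t$ contains a vertex of degree at most $t$ (equivalently, such graphs are $t$-degenerate; in a tree decomposition of width $t$, pick a leaf bag, then any vertex in that bag not contained in the neighbouring bag has all its neighbours inside the leaf bag, hence degree at most $t$). Applying this with $t = k \cdot \tw(G) + k - 1$ to $H$ yields a vertex of degree at most $k \cdot \tw(G) + k - 1$ in $H$, and hence the stated bound on the minimum degree.

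There is no real obstacle here; the only item requiring any care is verifying that the inflated bags do form a valid tree decomposition, specifically that the connectivity condition on each inflated vertex of $\lex{G}{k}$ follows from the connectivity of the subtree assigned to its pre-image in $G$, and that cross-clique edges are covered at a bag where both pre-images coexist.
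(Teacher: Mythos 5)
Your proposal is correct and takes essentially the same route as the paper: inflate each bag of a tree decomposition of $G$ by its $k$ copies to bound $\tw(\lex{G}{k})$, then apply minor-monotonicity of treewidth and the fact that minimum degree is at most treewidth. The only difference is that you spell out the verification of the tree-decomposition axioms and the degeneracy argument, which the paper leaves implicit.
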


\begin{proof}
  A tree decomposition of $G$ can be turned into a tree decomposition
  of $\lex{G}{k}$ in the obvious way: in each bag, replace each vertex
  by its $k$ copies in $\lex{G}{k}$.  The size of each bag is
  multiplied by $k$; hence the new tree decomposition has width at
  most $k(w+1)-1=kw+k-1$, where $w$ denotes the width of the original
  decomposition.  Let $H$ be a minor of $\lex{G}{k}$. Since treewidth
  is minor-monotone,
  $$\tw(H)\leq \tw(\lex{G}{k}) \leq k\cdot \tw(G) + k - 1\enspace.$$
  The claim follows since the minimum degree of a graph is at most its
  treewidth.
\end{proof}

Note that Lemma~\ref{lem:treewidth} can be written in terms of
contraction degeneracy; see \citep{BWK06,FijWoo}.


\medskip

Let $G$ be a graph and let $\Omega = (v_{1}, v_{2}, \dots, v_{t})$ be
a circular ordering of a subset of the vertices of $G$. We write
$V(\Omega)$ for the set $\{v_{1}, v_{2}, \dots, v_{t}\}$.  A 
\DEF{circular decomposition of $G$ with perimeter $\Omega$} is a multiset
$\{C\langle w\rangle\subseteq V(G):w\in V(\Omega)\}$ of subsets of
vertices of $G$, called \DEF{bags}, that satisfy the following
properties:
\begin{itemize}
\item every vertex $w\in V(\Omega)$ is contained in its corresponding
  bag $C\langle w\rangle$;
\item for every vertex $u\in V(G) \setminus V(\Omega)$, there exists
  $w \in V(\Omega)$ such that $u$ is in $ C\langle w\rangle$;
\item for every edge $e\in E(G)$, there exists $w \in V(\Omega)$ such
  that both endpoints of $e$ are in $ C\langle w\rangle$, and
\item for each vertex $u\in V(G)$, if $u \in C\langle v_{i}\rangle,
  C\langle v_{j}\rangle$ with $i < j$ then $u \in C\langle
  v_{i+1}\rangle, \dots, C\langle v_{j-1}\rangle$ or $u \in C\langle
  v_{j+1}\rangle, \dots, C\langle v_{t}\rangle, C\langle v_{1}\rangle,
  \dots, C\langle v_{i-1}\rangle$.
\end{itemize}
(The last condition says that the bags in which $u$ appears correspond
to consecutive vertices of $\Omega$.)  The \DEF{width} of the
decomposition is the maximum cardinality of a bag minus $1$.  The
ordered pair $(G, \Omega)$ is called a \DEF{vortex}; its width is the
minimum width of a circular decomposition of $G$ with perimeter
$\Omega$.

A \DEF{surface} is a non-null compact connected 2-manifold without
boundary.  Recall that the \DEF{Euler genus} of a surface $\Sigma$ is
$2 - \chi(\Sigma)$, where $\chi(\Sigma)$ denotes the Euler
characteristic of $\Sigma$. Thus the orientable surface with $h$
handles has Euler genus $2h$, and the non-orientable surface with $c$
cross-caps has Euler genus $c$.  The boundary of an open disc $D
\subset \Sigma$ is denoted by $\bd(D)$.

See \citep{MoharThom} for basic terminology and results about graphs
embedded in surfaces. When considering a graph $G$ embedded in a
surface $\Sigma$, we use $G$ both for the corresponding abstract graph
and for the subset of $\Sigma$ corresponding to the drawing of $G$. An
embedding of $G$ in $\Sigma$ is \emph{2-cell} if every face is
homeomorphic to an open disc.

Recall Euler's formula: if an $n$-vertex $m$-edge graph is 2-cell
embedded with $f$ faces in a surface of Euler genus $g$, then
$n-m+f=2-g$. Since $2m\geq 3f$,
\begin{equation}
  \label{eqn:Euler}
  m \leq 3n + 3g - 6\enspace,
\end{equation}
which in turn implies the following well-known upper bound on the
Hadwiger number.
\begin{lemma}
  \label{lem_upper_bound_Hadwiger_surface}
  If a graph $G$ has an embedding in a surface $\Sigma$ with Euler
  genus $g$, then
$$\eta(G) \leq \sqrt{6g} + 4\enspace.$$ 
\end{lemma}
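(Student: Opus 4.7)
The plan is to apply the edge bound (\ref{eqn:Euler}) directly to a complete graph minor of $G$, rather than to $G$ itself. The key observation is that embeddability in a fixed surface is preserved under taking minors: if $H$ is a minor of a graph embedded in $\Sigma$, then $H$ also embeds in $\Sigma$, since each edge contraction can be realised by a continuous identification within the drawing, and vertex/edge deletions merely remove points from the embedding.

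With this in hand, I would set $n:=\eta(G)$, so that $K_{n}$ is a minor of $G$ and hence embeds in $\Sigma$. The bound is trivial for $n\leq 2$, so assume $n\geq 3$; then $K_{n}$ is connected, and one can pass to a 2-cell embedding of $K_{n}$ in some surface of Euler genus $g'\leq g$, which makes (\ref{eqn:Euler}) directly applicable. Applied to $K_{n}$ (which has $\binom{n}{2}$ edges), this gives
\[
\binom{n}{2}\leq 3n+3g'-6\leq 3n+3g-6,
\]
equivalently, $n^{2}-7n+(12-6g)\leq 0$. Solving this quadratic in $n$ yields $n\leq \tfrac{1}{2}\bigl(7+\sqrt{24g+1}\bigr)$.

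Finally, using the elementary inequality $\sqrt{24g+1}\leq \sqrt{24g}+1=2\sqrt{6g}+1$ (obtained by squaring both sides), the bound simplifies to $n\leq 4+\sqrt{6g}$, which is the desired conclusion.

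There is essentially no serious obstacle here; the argument is a one-line application of the Euler edge bound once the right object to apply it to has been identified. The only point requiring mild care is that the embedding of $K_{n}$ inherited from the embedding of $G$ need not be 2-cell, so one must first restrict to a 2-cell embedding in a subsurface of no larger Euler genus before invoking (\ref{eqn:Euler}). The remainder is routine algebra.
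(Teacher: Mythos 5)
Your proof is correct and follows essentially the same route as the paper's: pass to the complete graph minor $K_n$, observe it embeds in $\Sigma$ and hence has a 2-cell embedding in a surface of Euler genus at most $g$, apply the edge bound (1), and solve the resulting quadratic. The only difference is that you write out the algebra and the $n\leq 2$ edge case explicitly, which the paper leaves implicit.
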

\begin{proof}
  Let $t:=\eta(G)$. Then $K_t$ has an embedding in $\Sigma$. It is
  well-known that this implies that $K_t$ has a 2-cell embedding in a
  surface of Euler genus at most $g$ (see \citep{MoharThom}). Hence
  $\binom{t}{2} \leq 3t + 3g - 6$ by \eqref{eqn:Euler}. In particular,
  $t \leq \sqrt{6g} + 4$.
\end{proof}


Let $G$ be an embedded multigraph, and let $F$ be a facial walk of
$G$. Let $v$ be a vertex of $F$ with degree more than $3$. Let
$e_1,\dots,e_d$ be the edges incident to $v$ in clockwise order around
$v$, such that $e_1$ and $e_d$ are in $F$.  Let $G'$ be the embedded
multigraph obtained from $G$ as follows.  First, introduce a path
$x_1,\dots,x_d$ of new vertices.  Then for each $i\in[1,d]$, replace
$v$ as the endpoint of $e_i$ by $x_i$.  The clockwise ordering around
$x_{i}$ is as described in
Figure~\ref{fig:VertexSplittingAtFace}. Finally delete $v$.  We say
that $G'$ is obtained from $G$ by \DEF{splitting} $v$ at $F$. Each
vertex $x_i$ is said to \emph{belong} to $v$.  By construction, $x_i$
has degree at most $3$.  Observe that there is a one-to-one
correspondence between facial walks of $G$ and $G'$. This process can
be repeated at each vertex of $F$. The embedded graph that is obtained
is called the \emph{splitting} of $G$ at $F$. And more generally, if
$F_1,\dots,F_p$ are pairwise vertex-disjoint facial walks of $G$, then
the embedded graph that is obtained by splitting each $F_i$ is called
the \emph{splitting} of $G$ at $F_1,\dots,F_p$. (Clearly, the
splitting of $G$ at $F_1,\dots,F_p$ is unique.)\

\begin{figure}[!htb]
  \begin{center}
    \includegraphics{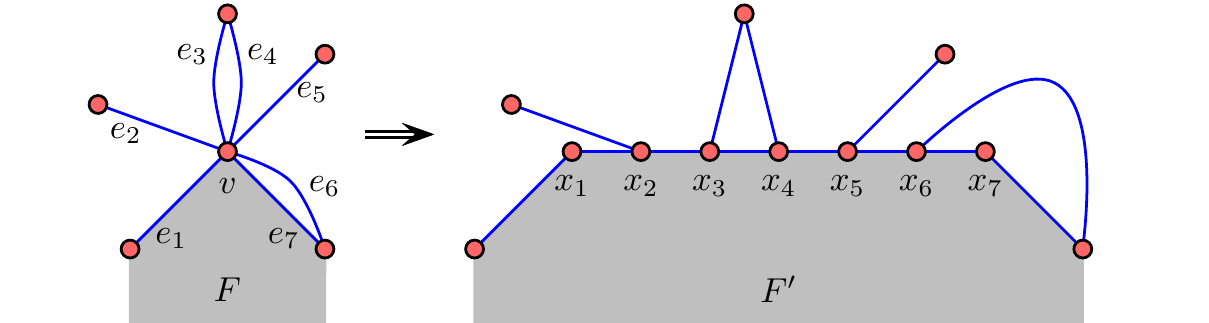}
  \end{center}
  \caption{Splitting a vertex $v$ at a face $F$.}
  \label{fig:VertexSplittingAtFace}
\end{figure}



\medskip For $g, p,k\geq0$, a graph $G$ is \emph{$(g,p,k)$-almost
  embeddable} if there exists a graph $G_{0}$ embedded in a surface
$\Sigma$ of Euler genus at most $g$, and there exist $q\leq p$
vortices $(G_{1}, \Omega_{1}), \dots, (G_{q}, \Omega_{q})$, each of
width at most $k$, such that
\begin{itemize}
\item $G = G_{0} \cup G_{1} \cup \cdots \cup G_{q}$;
\item the graphs $G_{1}, \dots, G_q$ are pairwise vertex-disjoint;
\item $V(G_i)\cap V(G_0)=V(\Omega_i)$ for all $i\in[1,q]$, and
\item there exist $q$ disjoint closed discs in $\Sigma$ whose
  interiors $D_{1}, \dots, D_{q}$ are disjoint from $G_{0}$, whose
  boundaries meet $G_{0}$ only in vertices, and such that $\bd(D_{i})
  \cap V(G_{0}) = V(\Omega_{i})$ and the cyclic ordering $\Omega_{i}$
  is compatible with the natural cyclic ordering of $V(\Omega_{i})$
  induced by $\bd(D_i)$, for all $i\in [1, q]$.
\end{itemize}

Let $\G(g,p,k)$ be the set of $(g,p,k)$-almost embeddable graphs. Note
that $\G(g,0,0)$ is exactly the class of graphs with Euler genus at
most $g$.  Also note that the literature defines a graph to be 
\DEF{$h$-almost embeddable} if it is $(h,h,h)$-almost embeddable. To
enable more accurate results we distinguish the three parameters.

Let $G_1$ and $G_2$ be disjoint graphs. Let $\{v_1,\dots,v_k\}$ and
$\{w_1,\dots,w_k\}$ be cliques of the same cardinality in $G_1$ and
$G_2$ respectively. A \DEF{clique-sum} of $G_1$ and $G_2$ is any
graph obtained from $G_1\cup G_2$ by identifying $v_i$ with $w_i$ for
each $i\in[1,k]$, and possibly deleting some of the edges $v_iv_j$.

The above definitions make precise the definition of $\G(g,p,k,a)^{+}$
given in the introduction.  We conclude this section with an easy
lemma on clique-sums.

\begin{lemma}
  \label{lem:CliqueSum}
  If a graph $G$ is a clique-sum of graphs $G_1$ and $G_2$,
  then $$\eta(G)\leq\max\{\eta(G_1),\eta(G_2)\}\enspace.$$
\end{lemma}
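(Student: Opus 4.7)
Let $C = V(G_1)\cap V(G_2)$ be the clique along which the sum is taken; recall that $C$ spans a clique in each of $G_1$ and $G_2$, even though some of its edges may be absent in $G$ because the clique-sum operation allows deletion of edges inside $C$. Let $n = \eta(G)$ and fix a $K_n$-model $\{S_x : x \in V(K_n)\}$ in $G$. The plan is to show that this model can be \emph{pushed} to one side: either $K_n$ is already a subgraph of $G_1[C]$, or the restrictions $T_x := S_x \cap V(G_i)$ for an appropriate $i\in\{1,2\}$ form a $K_n$-model in $G_i$.

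First I would dispose of the easy case: if every branch set meets $C$, then pairwise disjointness of the $S_x$ forces $|C|\geq n$, and since $C$ spans a clique in $G_1$, $K_n$ is a subgraph of $G_1$. Otherwise some branch set $S_z$ is disjoint from $C$, and by symmetry I may assume $S_z\subseteq V(G_1)\setminus C$. I then claim that $T_x := S_x\cap V(G_1)$ yields a $K_n$-model in $G_1$. Three things need to be verified: each $T_x$ is nonempty, each $T_x$ induces a connected subgraph of $G_1$, and every pair $T_y, T_{y'}$ is joined by an edge of $G_1$. Nonemptiness is immediate: any edge from $S_x$ to $S_z$ in $G$ must lie in $G_1$ because its $S_z$-endpoint is not in $V(G_2)$, forcing its other endpoint into $V(G_1)\cap S_x = T_x$. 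Connectedness of $T_x$ in $G_1$ follows from a short observation: any component $Q$ of $G_1[T_x]$ disjoint from $C$ would lie in $V(G_1)\setminus C$ and therefore have no edges in $G$ to $S_x\setminus Q$, contradicting connectedness of $S_x$; so every component of $G_1[T_x]$ meets $C$, and the clique-edges of $C$ in $G_1$ tie the components together.

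The main obstacle is the third point, producing an edge of $G_1$ between $T_y$ and $T_{y'}$ for every pair $y\neq y'$, because the witness edge $uv$ provided by the original model may be an edge of $G_2$ absent from $G_1$ (either because it was deleted from the clique, or because one endpoint lies in $V(G_2)\setminus C$ and so does not exist in $G_1$ at all). The rescue is a small auxiliary observation, which I would isolate as a sub-claim: whenever $T_y\neq S_y$, connectedness of $S_y$ together with the fact that $C$ separates $V(G_1)\setminus C$ from $V(G_2)\setminus C$ in $G$ forces $S_y\cap C\neq\emptyset$, hence $T_y\cap C\neq\emptyset$. Using this, if the witness edge $uv$ is not in $E(G_1)$, then at least one of its endpoints lies in $V(G_2)\setminus C$, forcing the corresponding branch set to meet $C$; the other endpoint either lies in $C$ directly or, by the same reasoning, belongs to a branch set meeting $C$. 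Picking $c_y\in T_y\cap C$ and $c_{y'}\in T_{y'}\cap C$, which are distinct by disjointness of the branch sets, and using that $C$ is a clique in $G_1$, we obtain the required edge $c_y c_{y'}\in E(G_1)$ between $T_y$ and $T_{y'}$, completing the proof.
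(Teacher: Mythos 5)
Your proof is correct and follows essentially the same strategy as the paper's: show that the entire $K_n$-model can be ``pushed'' into one side $G_i$ by restricting each branch set to $V(G_i)$ and using the clique $C=V(G_1)\cap V(G_2)$ to repair connectivity and missing witness edges. You spell out several verifications (nonemptiness, connectedness, and replacement of a lost witness edge by a clique edge) that the paper leaves implicit, and your preliminary case split (all branch sets meet $C$ versus some $S_z$ misses $C$) is a minor cosmetic variant of the paper's dichotomy (every branch set meets $V(G_1)$ or every branch set meets $V(G_2)$), but the underlying argument is the same.
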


\begin{proof}
  Let $t:=\eta(G)$ and let $S_1,\dots,S_t$ be the branch sets of a
  $K_{t}$-model in $G$.  If some branch set $S_i$ were contained in
  $G_1 \setminus V(G_2)$, and some branch set $S_j$ were contained in
  $G_2 \setminus V(G_1)$, then there would be no edge between $S_i$
  and $S_j$ in $G$, which is a contradiction.  Thus every branch set
  intersects $V(G_1)$, or every branch set intersects $V(G_2)$.
  Suppose that every branch set intersects $V(G_1)$.  For each branch
  set $S_i$ that intersects $G_1\cap G_2$ remove from $S_{i}$ all
  vertices in $V(G_{2}) \setminus V(G_{1})$.  Since $V(G_1)\cap
  V(G_2)$ is a clique in $G_1$, the modified branch sets yield a
  $K_{t}$-model in $G_{1}$.  Hence $t\leq\eta(G_1)$.  By symmetry,
  $t\leq\eta(G_2)$ in the case that every branch set intersects $G_2$.
  Therefore $\eta(G)\leq\max\{\eta(G_1),\eta(G_2)\}$.
\end{proof}

\section{Proof of Upper Bound}
\label{sec:UpperBound}

The aim of this section is to prove the following theorem.

\begin{theorem}
  \label{thm:upper_bound}
  For all integers $g,p,k\geq0$, every graph $G$ in $\G(g,p,k)$
  satisfies
$$\eta(G)\leq 48(k+1)\sqrt{g+p} + \sqrt{6g} + 5\enspace.$$
\end{theorem}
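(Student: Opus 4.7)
My approach would be to analyze a $K_n$-model in $G$, separating branch sets that live in vortex interiors from those that reach the base graph $G_0$, and bounding each contribution separately.

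Let $\{S_1,\dots,S_n\}$ be the branch sets of a $K_n$-model in $G$. First I would bound the branch sets contained entirely in some vortex interior $V(G_i)\setminus V(\Omega_i)$. Cutting the circular decomposition of a vortex at one point yields a path decomposition of width at most $2k+1$, so each vortex has pathwidth at most $2k+1$ and hence Hadwiger number at most $2(k+1)$. Because two branch sets in distinct vortex interiors cannot be adjacent in $G$ (vortices communicate only through perimeter vertices, which lie in $V(G_0)$), all interior branch sets must lie in a single vortex, so their total number is at most $2(k+1)$.

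For the remaining branch sets (those touching $V(G_0)$), I would project each one to $V(G_0)$ and analyze the resulting family as a large clique-minor model in an augmented graph $G_0^{+}$. The augmentation adds, in each disc $D_i$, ``shortcut'' edges or auxiliary vertices encoding the vortex's perimeter-to-perimeter connectivity, chosen so that each projection becomes a connected subgraph of $G_0^+$. If the augmentation can be constructed so that $G_0^+$ embeds in a surface of Euler genus at most $g+c_1 p(k+1)^2$ for some absolute constant $c_1$, then Lemma~\ref{lem_upper_bound_Hadwiger_surface} bounds the number of such branch sets by $\sqrt{6(g+c_1 p(k+1)^2)}+4 \leq \sqrt{6g}+(k+1)\sqrt{6c_1 p}+4$. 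Summing with the interior-branch-set contribution and using $\sqrt{p} \leq \sqrt{g+p}$ yields the claimed $48(k+1)\sqrt{g+p}+\sqrt{6g}+5$ for any $c_1 \leq 384$.

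The main obstacle is the Euler-genus control in the augmentation step. A naive augmentation that adds all bag-clique edges on each perimeter can produce unbounded-genus subgraphs: for example, a vortex isomorphic to $K_{3,t}$ would contribute a $K_{3,t}$ structure of Euler genus $\Theta(t)$. Resolving this likely requires the splitting operation described in the preliminaries to reduce the effective degree at perimeter vertices, combined with a vortex-by-vortex construction that adds only $O((k+1)^2)$ edges per vortex---independent of the perimeter length. An alternative route is to bound directly, for each vortex, the number of branch sets that can ``cross'' it by $O((k+1)^2)$ using the bounded pathwidth of the vortex, rather than constructing the augmented embedding explicitly.
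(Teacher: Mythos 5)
Your overall structure (separate interior branch sets from boundary-touching ones; bound each contribution) is in the right spirit, and your bound on interior branch sets is correct: cutting a circular decomposition at one point does give pathwidth at most $2k+1$ after adding the $\leq k+1$ ``wrap-around'' vertices to every bag, and interior branch sets from distinct vortices can never be adjacent, so they all live in one vortex and number at most $2(k+1)$. But the step you yourself flag as the obstacle---constructing $G_0^+$---is the entire content of the theorem, and as presented there is a genuine gap.

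The difficulty is worse than a genus miscount. Your augmentation must simultaneously (i) reconnect projections of branch sets whose $G_0$-part is disconnected and only joined through vortex interiors, and (ii) realize adjacencies between branch sets that touch only via a vortex edge $uw$ with both endpoints off the perimeter. For (ii), a perimeter vertex can lie in the bags of up to $k+1$ different branch sets, so the projections are not pairwise touching in any subgraph of $G_0$; you would need either to blow each perimeter vertex up into $k+1$ copies (giving a $\lex{G_0}{k+1}$-type structure, whose genus scales like $(k+1)^2 g$, not $g + O(p(k+1)^2)$) or to add gadgets inside each disc whose size grows with the perimeter length, not with $k$. Neither of your two escape routes addresses this: adding only $O((k+1)^2)$ edges per vortex cannot encode connectivity along an arbitrarily long perimeter, and the ``bound the crossing branch sets by $O((k+1)^2)$'' route ignores that adjacency (not just connectivity) is mediated by the vortex and that many branch sets can each cross locally at different points along the boundary. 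The $K_{3,t}$ example you raise is a symptom of exactly this.

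The paper sidesteps the augmentation entirely. Lemma~\ref{lem:combined_lemma} projects the vortex-touching branch sets onto $G_0$ \emph{allowing} overlaps: each perimeter vertex may belong to up to $k+1$ projected branch sets, yielding a $(K_r,k+1)$-model (equivalently a $K_r$-minor of $\lex{G_0}{k+1}$) rather than a genuine $K_r$-model, so no disjointness needs to be manufactured. After a vertex-splitting step that makes every vertex lie on a cuff boundary with degree at most $3$, Lemma~\ref{lem:main_tool} bounds the minimum degree of any such relaxed minor directly by $48k\sqrt{c+g}$, via an induction that contracts or removes homotopy classes of non-cuff edges and finishes with an Euler-formula count. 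That inductive argument is where the $\sqrt{g+p}$ scaling comes from, and it has no analogue in your sketch. To salvage your approach you would need a concrete augmentation lemma, with proof, producing $G_0^+$ of genus $O((k+1)^2(g+p))$ (which would suffice) together with a genuine, not merely overlapping, clique model in it; that is essentially as hard as the paper's Lemma~\ref{lem:main_tool}.
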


Combining this theorem with Lemma~\ref{lem:CliqueSum} gives the
following quantitative version of the first part of
Theorem~\ref{thm:Main}.

\begin{corollary}
  \label{cor:CliqueMinor}
  For every graph $G \in \G(g,p,k,a)^+$,
$$\eta(G) \leq a + 48(k+1)\sqrt{g+p} + \sqrt{6g} + 5\enspace.$$
\end{corollary}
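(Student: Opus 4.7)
The plan is to derive this corollary by combining Theorem~\ref{thm:upper_bound} with Lemma~\ref{lem:CliqueSum} in two successive reductions: first collapse the clique-sums, then peel off the apex vertices, so that what remains is a graph in $\G(g,p,k)$ to which Theorem~\ref{thm:upper_bound} applies directly.

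First I would induct on the number of summands used to build $G \in \G(g,p,k,a)^+$ from members of $\G(g,p,k,a)$. The base case is a single summand, handled in the next step. For the inductive step, $G$ is a clique-sum of two graphs $G_1, G_2$, each itself a clique-sum of fewer summands and so satisfying the claimed bound by induction; Lemma~\ref{lem:CliqueSum} then gives $\eta(G) \leq \max\{\eta(G_1), \eta(G_2)\}$, preserving the bound. Hence it suffices to treat a single $H \in \G(g,p,k,a)$. For such $H$, fix the apex set $A \subseteq V(H)$ with $|A| \leq a$ and $H \setminus A \in \G(g,p,k)$, and observe that $\eta(H) \leq \eta(H \setminus A) + a$ by a standard branch-set argument: in any $K_t$-model in $H$, each vertex of $A$ lies in exactly one branch set, so at most $a$ branch sets meet $A$; discarding those leaves at least $t-a$ connected, pairwise-disjoint branch sets in $V(H) \setminus A$ whose pairwise adjacencies are realized by edges of $H \setminus A$, giving a $K_{t-a}$-model in $H \setminus A$. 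Applying Theorem~\ref{thm:upper_bound} to $H \setminus A$ then yields $\eta(H \setminus A) \leq 48(k+1)\sqrt{g+p} + \sqrt{6g} + 5$, and adding $a$ gives the corollary.

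No serious obstacle arises: Theorem~\ref{thm:upper_bound} carries all of the analytic weight, and the rest is routine bookkeeping. The only points worth double-checking are that $\G(g,p,k,a)^+$ is realized by a finite sequence of pairwise clique-sums of members of $\G(g,p,k,a)$ (which is immediate from the definition in Section~\ref{sec:Definitions}) and that the apex-reduction loses exactly $+a$ rather than something weaker; both are standard.
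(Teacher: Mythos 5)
Your proof is correct and follows essentially the same route as the paper's: reduce to a single member of $\G(g,p,k,a)$ via Lemma~\ref{lem:CliqueSum}, subtract $a$ for the apex vertices, then apply Theorem~\ref{thm:upper_bound}. The paper's version is simply terser, leaving the clique-sum induction and the $\eta(G') \leq \eta(G' \setminus A) + a$ step implicit.
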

\begin{proof}
  Let $G \in \G(g,p,k,a)^+$.  Lemma~\ref{lem:CliqueSum} implies that
  $\eta(G) \leq \eta(G')$ for some graph $G' \in \G(g,p,k,a)$.
  Clearly, $\eta(G') \leq \eta(G' \setminus A) + a$, where $A$ denotes
  the (possibly empty) apex set of $G'$.  Since $G' \setminus A \in
  \G(g,p,k)$, the claim follows from Theorem~\ref{thm:upper_bound}.
\end{proof}

The proof of Theorem~\ref{thm:upper_bound} uses the following
definitions. Two subgraphs $A$ and $B$ of a graph $G$ \DEF{touch} if
$A$ and $B$ have at least one vertex in common or if there is an edge
in $G$ between a vertex in $A$ and another vertex in $B$.  We
generalize the notion of minors and models as follows. For an integer
$k \geq 1$, a graph $H$ is said to be an \DEF{$(H,k)$-minor} of a
graph $G$ if there exists a collection $\{S_{x}: x \in V(H)\}$ of
connected subgraphs of $G$ (called \emph{branch sets}), such that
$S_{x}$ and $S_{y}$ touch in $G$ for every edge $xy \in E(H)$, and
every vertex of $G$ is included in at most $k$ branch sets in the
collection.  The collection $\{S_{x}:x\in V(H)\}$ is called an 
\DEF{$(H,k)$-model} in $G$.  Note that for $k=1$ this definition
corresponds to the usual notions of $H$-minor and $H$-model.  As shown
in the next lemma, this generalization provides another way of
considering $H$-minors in $\lex{G}{k}$, the lexicographic product of
$G$ with $K_{k}$. (The easy proof is left to the reader.)

\begin{lemma}
  \label{lem:lex_prod_k_minor}
  Let $k \geq 1$. A graph $H$ is an $(H, k)$-minor of a graph $G$ if
  and only if $H$ is a minor of $\lex{G}{k}$.
\end{lemma}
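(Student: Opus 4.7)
The plan is to prove both directions by exhibiting the natural correspondence between $(H,k)$-models in $G$ and $H$-models in $\lex{G}{k}$, using the projection $\pi:\lex{G}{k}\to G$ that sends each clique $C_v$ to $v$.

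For the ``only if'' direction, I would start with an $(H,k)$-model $\{S_x:x\in V(H)\}$ in $G$. For each $v\in V(G)$, let $X_v:=\{x\in V(H):v\in V(S_x)\}$; by hypothesis $|X_v|\leq k$, so I can injectively assign the elements of $X_v$ to distinct copies of $v$ in $C_v\subseteq V(\lex{G}{k})$, calling the copy assigned to $x$ the vertex $v_x$. Then I would define $S'_x$ to be the subgraph of $\lex{G}{k}$ induced by $\{v_x:v\in V(S_x)\}$. Connectedness of $S'_x$ follows from connectedness of $S_x$: for each edge $vw\in E(S_x)$ the vertices $v_x$ and $w_x$ lie in $C_v$ and $C_w$ respectively and hence are adjacent in $\lex{G}{k}$. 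Disjointness of the $S'_x$ is immediate from the injective choice. For each edge $xy\in E(H)$, if $S_x$ and $S_y$ share a vertex $v$ then $v_x,v_y\in C_v$ are adjacent in $\lex{G}{k}$ (since $C_v$ is a clique), and if instead there is an edge $vw\in E(G)$ with $v\in S_x$ and $w\in S_y$ then $v_xw_y$ is an edge of $\lex{G}{k}$ by definition of the lexicographic product.

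For the ``if'' direction, I would start with an $H$-model $\{T_x:x\in V(H)\}$ in $\lex{G}{k}$ and set $S_x$ to be the subgraph of $G$ induced by the projection $\pi(V(T_x))=\{v\in V(G):V(T_x)\cap C_v\neq\emptyset\}$, together with enough edges of $G$ to make it connected (concretely, take the image of $T_x$ under $\pi$: every edge of $T_x$ is either inside some $C_v$, which projects to a single vertex, or between $C_v$ and $C_w$ with $vw\in E(G)$, which projects to the edge $vw$, so $\pi(T_x)$ is a connected subgraph of $G$). Each vertex $v\in V(G)$ lies in $S_x$ only if $T_x\cap C_v\neq\emptyset$, and since the $T_x$ are pairwise disjoint and $|C_v|=k$, at most $k$ branch sets $S_x$ contain $v$. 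If $xy\in E(H)$ then some edge of $\lex{G}{k}$ joins $T_x$ and $T_y$; either it lies inside a clique $C_v$ (forcing $v\in V(S_x)\cap V(S_y)$) or it goes between $C_v$ and $C_w$ for some edge $vw\in E(G)$ with $v\in V(S_x)$, $w\in V(S_y)$, so $S_x$ and $S_y$ touch.

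There is no real obstacle here, since both transformations are the obvious ones; the only thing worth stating carefully is the injective assignment $x\mapsto v_x$ in the first direction, which relies crucially on the hypothesis $|X_v|\leq k=|C_v|$, and the verification that $\pi(T_x)$ is a connected subgraph of $G$ in the second direction, which uses the structure of edges in $\lex{G}{k}$.
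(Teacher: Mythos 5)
Your proof is correct; the paper itself leaves this lemma's proof to the reader, and your argument fills it in via the natural bijection between $(H,k)$-models in $G$ and $H$-models in $\lex{G}{k}$, with the key points (injective assignment $x\mapsto v_x$ when $|X_v|\leq k$, and the projection $\pi$ sending connected subgraphs to connected subgraphs) handled properly.
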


For a surface $\Sigma$, let $\Sigma_{c}$ be $\Sigma$ with $c$ cuffs
added; that is, $\Sigma_c$ is obtained from $\Sigma$ by removing the
interior of $c$ pairwise disjoint closed discs. (It is well-known that the
locations of the discs are irrelevant.)
When considering graphs embedded in $\Sigma_{c}$ we require the
embedding to be $2$-cell.  We emphasize that this is a non-standard
and relatively strong requirement; in particular, it implies that the
graph is connected, and the boundary of each cuff intersects the graph 
in a cycle. Such cycles are called \DEF{cuff-cycles}.

For $g\geq 0$ and $c\geq 1$, a graph $G$ is \DEF{$(g, c)$-embedded}
if $G$ has maximum degree $\Delta(G) \leq 3$ and $G$ is embedded in a
surface of Euler genus at most $g$ with at most $c$ cuffs added, such
that {\em every} vertex of $G$ lies on the boundary of the surface.
(Thus the cuff-cycles induce a partition of the whole vertex set.)\
The graph $G$ is \DEF{$(g, c)$-embeddable} if there exists such an
embedding.  Note that if $C$ is a contractible cycle in a
$(g,c)$-embedded graph, then the closed disc bounded by $C$ is
uniquely determined even if the underlying surface is the sphere
(because there is at least one cuff).


\begin{lemma}
  \label{lem:combined_lemma}
  For every graph $G \in \G(g,p,k)$ there exists a $(g,p)$-embeddable
  graph $H$ with $\eta(G) \leq \eta(\lex{H}{k+1})+\sqrt{6g} + 4$.
\end{lemma}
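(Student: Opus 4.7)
The overall plan is to convert any $K_t$-model in $G$ into a $(K_{t'}, k+1)$-model in a suitable $(g,p)$-embeddable graph $H$, losing $t - t' \leq \sqrt{6g}+4$ branch sets in the process, and then invoke Lemma~\ref{lem:lex_prod_k_minor} to conclude that $K_{t'}$ is a minor of $\lex{H}{k+1}$.

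First I would construct $H$ from the almost-embedding of $G$. Write $G = G_0 \cup G_1 \cup \cdots \cup G_q$ as in the definition of $\G(g,p,k)$, with $G_0$ embedded in a surface $\Sigma$ of Euler genus at most $g$ and $q \leq p$ vortices $(G_i,\Omega_i)$ attached inside pairwise disjoint closed discs $D_1,\dots,D_q$. Remove the open discs $D_i$ so that $G_0$ is embedded in $\Sigma_q$, with each $\Omega_i$ bounding a cuff. Apply the vertex-splitting operation of Section~\ref{sec:Definitions} at the cuff face to every vertex of each $\Omega_i$: this makes every perimeter vertex have degree at most $3$ while keeping it on the cuff cycle. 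Then dispose of the remaining interior vertices of $G_0$ by contracting each connected component of $G_0 - \bigcup_i V(\Omega_i)$ into one of its perimeter neighbours, keeping the resulting graph embedded in $\Sigma_q$. The resulting graph $H$ has maximum degree at most $3$, has all its vertices on cuff cycles of $\Sigma_q$, and is embedded in a surface of Euler genus at most $g$ with at most $p$ cuffs; hence $H$ is $(g,p)$-embeddable.

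Given a $K_t$-model $\{B_1, \dots, B_t\}$ in $G$, I would separate the branch sets into \emph{interior} ones (disjoint from every $V(\Omega_i)$) and \emph{perimeter-touching} ones. Since a vortex is attached to the rest of $G$ only through its perimeter, any interior branch set lies entirely in $G_0 - \bigcup_i V(\Omega_i)$, and any touching between two interior branch sets is already realised inside this subgraph. Hence the interior branch sets yield a $K_s$-minor of a graph embeddable in $\Sigma$, and Lemma~\ref{lem_upper_bound_Hadwiger_surface} gives $s \leq \sqrt{6g}+4$. It remains to turn the remaining $t-s$ perimeter-touching branch sets into a $(K_{t-s}, k+1)$-model in $H$: for each such $B_i$, the part inside $G_0$ is routed through $H$ via the splittings (reversible by contraction) and the interior-component contractions defined above, while the part inside each vortex $G_j$ is projected onto $\Omega_j$ using the bags of the width-$k$ circular decomposition.

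Connectedness of each $S_i$ and the property that $S_i$ and $S_j$ touch in $H$ whenever $B_i$ and $B_j$ touch in $G$ are essentially inherited from the construction. The main obstacle is the multiplicity condition, that no vertex of $H$ is used by more than $k+1$ of the $S_i$'s. This is precisely where the vortex width bound, equivalent to bag size at most $k+1$, enters: at any single bag at most $k+1$ of the branch sets can be simultaneously present, so projecting onto the corresponding cuff vertex of $H$ yields multiplicity at most $k+1$. Some care is needed to verify that the interior-component contractions do not worsen the multiplicity, by choosing the contraction target per component canonically. Once the $(K_{t-s}, k+1)$-model in $H$ is in hand, Lemma~\ref{lem:lex_prod_k_minor} delivers $K_{t-s}$ as a minor of $\lex{H}{k+1}$, so $\eta(\lex{H}{k+1}) \geq t - s \geq \eta(G) - \sqrt{6g} - 4$, completing the plan.
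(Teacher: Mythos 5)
Your overall plan is the same as the paper's (split off the branch sets that avoid the vortices and bound them by the genus, project the remaining branch sets into a degree-$3$ graph on the cuff cycles using the bags of the circular decompositions, then apply Lemma~\ref{lem:lex_prod_k_minor}), but two of the concrete steps as you have stated them are wrong.

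\textbf{The classification of branch sets.} You call a branch set ``interior'' if it is disjoint from every $V(\Omega_i)$ and assert that such a branch set lies in $G_{0}-\bigcup_i V(\Omega_i)$. That is false: a branch set can sit entirely inside the interior of a vortex, i.e.\ in $V(G_j)\setminus V(\Omega_j)$ for some $j$, and such a branch set is disjoint from every $V(\Omega_i)$ yet is \emph{not} in $G_0$. So Lemma~\ref{lem_upper_bound_Hadwiger_surface} does not give $s\leq\sqrt{6g}+4$ for your $s$. The correct dichotomy (the one used in the paper) is whether the branch set meets $V(G_j)$ for some $j$ (i.e.\ any vortex vertex, interior or perimeter). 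Since $V(G_j)\cap V(G_0)=V(\Omega_j)$, a branch set meeting no $V(G_j)$ really does live in $G_{0}-\bigcup_j V(\Omega_j)$, and then the genus bound applies. The branch sets living strictly inside a vortex interior are handled on the ``vortex'' side of the dichotomy: projecting them via the bags works, since every such vertex lies in some bag $C\langle w\rangle$.

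\textbf{The contraction of the $G_0$-interior.} Contracting each connected component of $G_0-\bigcup_i V(\Omega_i)$ into a single perimeter vertex does not control multiplicity. A component of $G_0-\bigcup_i V(\Omega_i)$ can intersect arbitrarily many branch sets, and after collapsing it to one cuff vertex $w$, all of those branch sets now pass through $w$; the multiplicity at $w$ is then unbounded, not $\leq k+1$. Your remark that one should ``choose the contraction target per component canonically'' does not repair this, because the defect is not in the choice of target but in contracting a multi-branch-set region to one vertex. What is needed is to contract along edges \emph{within} a branch set, as the paper does: repeatedly contract an edge $uv$ of $G_0$ lying in some $S_i$ with $u$ a vortex vertex and $v$ not, absorbing each branch set's $G_0$-interior into that branch set's own vortex footprint. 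This keeps the modified branch sets pairwise disjoint, so after the bag projection the multiplicity at any cuff vertex is still at most the bag size $k+1$. Two smaller points: you split the perimeter vertices to degree $3$ \emph{before} contracting the interior, which ruins the degree bound (contract first, split last); and you gloss over the fact that the resulting embedding of $G_0$ is $2$-cell with each $\Omega_j$ bounding a cuff, which the paper proves via the combinatorial-embedding argument after replacing each $\Omega_j$ by a cycle of cuff edges.
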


\begin{proof}
  Let $t:= \eta(G)$. Let $S_1,\dots,S_t$ be the branch sets of a
  $K_t$-model in $G$. Since $\eta(G)$ equals the Hadwiger number of
  some connected component of $G$, we may assume that $G$ is
  connected.  Thus we may `grow' the branch sets until
  $V(S_1)\cup\dots\cup V(S_t)=V(G)$.


  Write $G = G_{0} \cup G_{1}\cup\cdots\cup G_{q}$ as in the
  definition of $(g,p,k)$-almost embeddable graphs. Thus $G_{0}$ is
  embedded in a surface $\Sigma$ of Euler genus at most $g$, and
  $(G_{1}, \Omega_{1}), \dots, (G_{q},\Omega_{q})$ are pairwise vertex-disjoint
  vortices of width at most $k$, for some $q\leq p$.  Let $D_{1},
  \dots, D_{q}$ be the proper interiors of the closed discs of
  $\Sigma$ appearing in the definition.

  Define $r$ and reorder the branch sets, so that each $S_i$ contains
  a vertex of some vortex if and only if $i \leq r$.  If $t > r$, then
  $S_{r+1},\dots, S_{t}$ is a $K_{t-r}$-model in the embedded graph
  $G_{0}$, and hence $t-r \leq \sqrt{6g} + 4$ by
  Lemma~\ref{lem_upper_bound_Hadwiger_surface}.  Therefore, it
  suffices to show that $r \leq \eta(\lex{H}{k+1})$ for some
  $(g,p)$-embeddable graph $H$.

  Modify $G$, $G_{0}$, and the branch sets $S_{1}, \dots, S_{r}$ as
  follows.
  First, remove from $G$ and $G_{0}$ every vertex of $S_{i}$ for all
  $i\in [r+1,t]$.
  Next, while some branch set $S_{i}$ ($i\in[1,r]$) contains an edge
  $uv$ in $G_{0}$ where $u$ is in some vortex, but $v$ is in no
  vortex, contract the edge $uv$ into $u$ (this operation is done in
  $S_{i}$, $G$, and $G_{0}$).  The above operations on $G_{0}$ are
  carried out in its embedding in the natural way. 
Now apply a final operation on $G$
  and $G_{0}$: for each $j\in [1,q]$ and each pair of consecutive
  vertices $a$ and $b$ in $\Omega_{j}$, remove the edge $ab$ if it
  exists, and embed the edge $ab$ as a curve on the boundary of $D_j$.

  When the above procedure is finished, every vertex of the modified
  $G_{0}$ belongs to some vortex.  It should be clear that the
  modified branch sets $S_{1},\dots,S_{r}$ still provide a model of
  $K_{r}$ in $G$.  Also observe that $G_{0}$ is connected; this is
  because $V(\Omega_{j})$ induces a connected subgraph for each $j\in
  [1,q]$, and each vortex intersects at least one branch set $S_{i}$
  with $i\in[1,r]$.  By the final operation, the boundary of the disc
  $D_{j}$ of $\Sigma$ intersects $G_{0}$ in a cycle $C_{j}$ of $G_{0}$
  with $V(C_{j}) = V(\Omega_{j})$ and such that $C_{j}$ (with the
  right orientation) defines the same cyclic ordering as $\Omega_{j}$
  for every $j\in[1,q]$.

  We claim that $G_{0}$ can be $2$-cell embedded in a surface
  $\Sigma'$ with Euler genus at most that of $\Sigma$, such that each
  $C_{j}$ ($j \in [1,q]$) is a facial cycle of the embedding. This
  follows by considering the combinatorial embedding (that is,
  circular ordering of edges incident to each vertex, and edge
  signatures) determined by the embedding in $\Sigma$
  (see~\cite{MoharThom}), and observing that under the above
  operations, the Euler genus of the combinatorial embedding does not
  increase, and facial walks remain facial walks (so that each $C_j$
  is a facial cycle).  Now, removing the $q$ open discs
  corresponding to these facial cycles gives a $2$-cell embedding
  of $G_0$ in $\Sigma'_q$.


  We now prove that $\eta(G_{0}[k+1])\geq r$.  For every $i\in [1,q]$,
  let $\{C\ang{w}\subseteq V(G_{i}):w\in V(\Omega_{i})\}$ denote a
  circular decomposition of width at most $k$ of the $i$-th vortex.
  For each $i\in [1,r]$, mark the vertices $w$ of $G_{0}$ for which
  $S_{i}$ contains at least one vertex in the bag $C\langle w\rangle$
  (recall that every vertex of $G_{0}$ is in the perimeter of some
  vortex), and define $S'_{i}$ as the subgraph of $G_{0}$ induced by
  the marked vertices. It is easily checked that $S'_{i}$ is a
  connected subgraph of $G_{0}$.  Also, $S'_{j}$ and $S'_{i}$ touch in
  $G_{0}$ for all $i \neq j$.  Finally, a vertex of $G_{0}$ will be
  marked at most $k+1$ times, since each bag has size at most $k+1$.
  It follows that $\{S'_{1}, \dots, S'_{r}\}$ is a $(K_{r},
  k+1)$-model in $G_{0}$, which implies by
  Lemma~\ref{lem:lex_prod_k_minor} that $K_{r}$ is minor of
  $G_{0}[k+1]$, as claimed.


  Finally, let $H$ be obtained from $G_{0}$ by splitting each vertex
  $v$ of degree more than $3$ along the cuff boundary that contains
  $v$.  (Clearly the notion of splitting along a face extends to
  splitting along a cuff.)\ By construction, $\Delta(H) \leq 3$ and
  $H$ is $(g,q)$-embedded.  The $(K_{r}, k+1)$-model of $G_{0}$
  constructed above can be turned into a $(K_{r}, k+1)$-model of $H$
  by replacing each branch set $S'_i$ by the union, taken over the
  vertices $v\in V(S'_i)$, of the set of vertices in $H$ that belong
  to $v$.  Hence $r \leq \eta(G_{0}[k+1]) \leq \eta(\lex{H}{k+1})$.
\end{proof}

We need to introduce a few definitions.  Consider a $(g,c)$-embedded
graph $G$.  An edge $e$ of $G$ is said to be a \DEF{cuff} or a 
\DEF{non-cuff} edge, depending on whether $e$ is included in a
cuff-cycle. Every non-cuff edge has its two endpoints in either the
same cuff-cycle or in two distinct cuff-cycles.  Since $\Delta(G)\leq
3$, the set of non-cuff edges is a matching.

A cycle $C$ of $G$ is an \emph{$F$-cycle} where $F$ is the set of
non-cuff edges in $C$.
A non-cuff edge $e$ is \DEF{contractible} if there exists a
contractible $\{e\}$-cycle, and is \DEF{noncontractible} otherwise.
Two non-cuff edges $e$ and $f$ are \DEF{homotopic} if $G$ contains a
contractible $\{e,f\}$-cycle.  Observe that if $e$ and $f$ are
homotopic, then they have their endpoints in the same cuff-cycle(s),
as illustrated in Figure~\ref{fig:HomotopicEdges}. We now prove that
homotopy defines an equivalence relation on the set of noncontractible
non-cuff edges of $G$.

\begin{figure}
  \centering
  \includegraphics{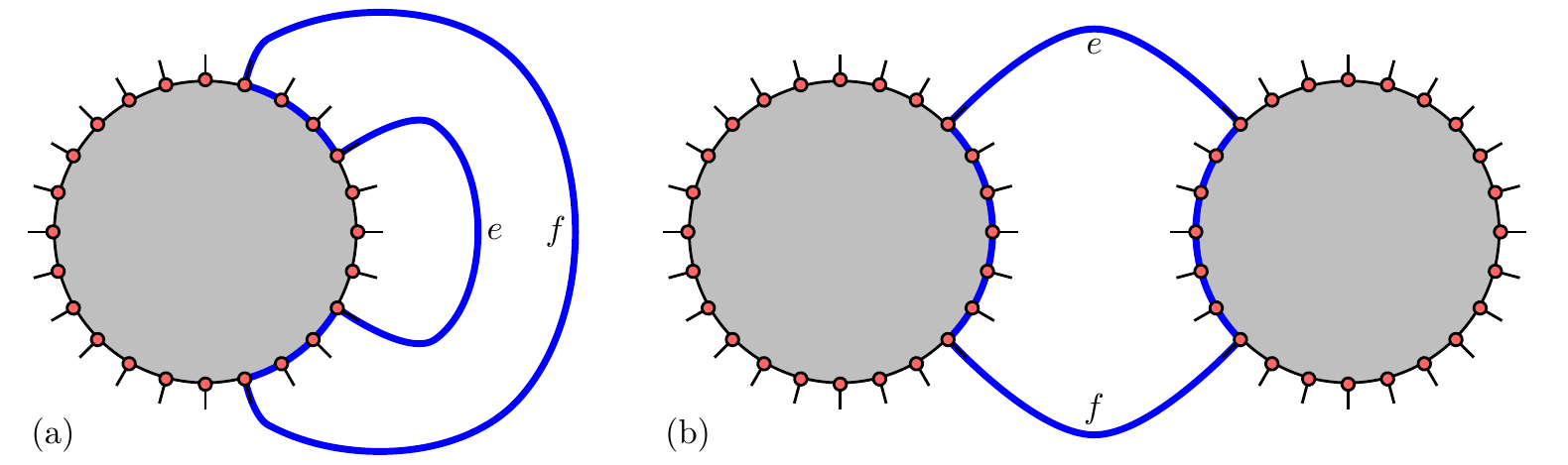}
  \caption{\label{fig:HomotopicEdges} Homotopic edges: (a) one cuff,
    (b) two cuffs.}
\end{figure}


\begin{lemma}
  \label{lem:equivalence}
  Let $G$ be a $(g,c)$-embedded graph, and let $e_{1}, e_{2}, e_{3}$
  be distinct noncontractible non-cuff edges of $G$, such that $e_{1}$
  is homotopic to $e_{2}$ and to $e_{3}$.  Then $e_{2}$ and $e_{3}$
  are also homotopic. Moreover, given a contractible
  $\{e_{1},e_{2}\}$-cycle $C_{12}$ bounding a closed disc $D_{12}$,
  for some distinct $i,j\in\{1,2,3\}$, there is a contractible
  $\{e_{i}, e_{j}\}$-cycle bounding a closed disc containing $e_{1},
  e_{2}, e_{3}$ and all noncontractible non-cuff edges of $G$
  contained in $D_{12}$.
\end{lemma}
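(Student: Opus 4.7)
The approach is to study how $e_3$ sits relative to the closed disc $D_{12}$ bounded by $C_{12}$. The given cycle $C_{12}$ decomposes as $e_1\cup\alpha\cup e_2\cup\beta$, where $\alpha$ and $\beta$ are arcs of cuff-cycles, and the homotopy hypothesis forces $e_1,e_2,e_3$ to have their endpoints on the same cuff-cycle(s). I split into two main cases according to whether $e_3\subseteq D_{12}$ or not.

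In the case $e_3\subseteq D_{12}$, since every vertex of $G$ lies on a cuff-cycle, the endpoints of $e_3$ sit on $\alpha\cup\beta$. If both lay on the same arc, say $\alpha$, then a sub-arc of $\alpha$ together with $e_3$ would form a contractible $\{e_3\}$-cycle inside $D_{12}$, contradicting the assumption that $e_3$ is noncontractible. Hence one endpoint of $e_3$ lies on $\alpha$ and the other on $\beta$, and concatenating $e_3$ with the sub-arcs of $\alpha,\beta$ running from its endpoints to those of $e_2$ produces a cycle $C_{23}$ contained in $D_{12}$, hence contractible; this shows that $e_2$ and $e_3$ are homotopic. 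Choosing $\{i,j\}=\{1,2\}$ with $C_{12},D_{12}$ settles the ``moreover'' part, since $D_{12}$ already contains $e_1,e_2,e_3$ and every non-cuff edge of $G$ lying in $D_{12}$.

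In the case $e_3\not\subseteq D_{12}$, I fix a contractible $\{e_1,e_3\}$-cycle $C_{13}$ bounding a disc $D_{13}$ (which exists because $e_1$ and $e_3$ are homotopic) and sub-split according to whether $e_2\subseteq D_{13}$. If $e_2\subseteq D_{13}$, I would argue topologically that in fact $D_{12}\subseteq D_{13}$, and then apply the construction from the previous case (with the roles of $D_{12}$ and $D_{13}$ swapped, and $e_2$ playing the role of $e_3$) to produce a contractible $\{e_2,e_3\}$-cycle in $D_{13}$ and conclude with $\{i,j\}=\{1,3\}$. If $e_2\not\subseteq D_{13}$, I would show that $D_{12}$ and $D_{13}$ meet only along $e_1$ and its endpoints, so that $D^{*}:=D_{12}\cup D_{13}$ is a closed disc with boundary $C^{*}:=(C_{12}\cup C_{13})\setminus\mathrm{int}(e_1)$, which is a contractible $\{e_2,e_3\}$-cycle bounding a disc containing $D_{12}$ and $e_3$; then $\{i,j\}=\{2,3\}$ works.

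The hardest part will be the topological bookkeeping in the second case: rigorously showing that when $e_2\subseteq D_{13}$ one has $D_{12}\subseteq D_{13}$, and that otherwise the two discs meet only along $e_1$. The dichotomy rests on how the cuff arcs of $C_{12}$ and $C_{13}$ leave each endpoint of $e_1$ along the cuff-cycle: if they leave in the same direction, one arc is a sub-arc of the other and a containment of discs follows, while if they leave in opposite directions, $D_{12}$ and $D_{13}$ lie on opposite sides of $e_1$; the noncontractibility of $e_1$, together with the contractibility of $C_{12}$ and $C_{13}$, rules out pathological global overlaps and forces one of these two clean configurations.
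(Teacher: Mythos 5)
Your proposal is correct and takes essentially the same route as the paper: you case-split on whether $e_3\subseteq D_{12}$ (the paper's ``$e_3$ is incident to $P_{12}$ or $Q_{12}$''), then on whether $e_2\subseteq D_{13}$ (equivalent to the paper's ``$P_{12}\subseteq P_{13}$ or a symmetric containment''), and your final sub-case is exactly the paper's gluing of $D_{12}$ and $D_{13}$ along $e_1$. The ``same direction vs.\ opposite direction'' dichotomy at the endpoints of $e_1$ that you flag as the crux is precisely what the paper formalizes with the path containments~$P_{12}\subseteq P_{13}$, etc., and the observation that every noncontractible non-cuff edge inside $D_{1i}$ must run from one side arc to the other.
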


\begin{proof}
  Let $C_{13}$ be a contractible $\{e_{1},e_{3}\}$-cycle. 
Let $P_{12},Q_{12}$ be the two paths in the graph $C_{12}
  \setminus\{e_{1},e_{2}\}$.  
Let $P_{13},
  Q_{13}$ be the two paths in the graph $C_{13}
  \setminus\{e_{1},e_{3}\}$.  Exchanging $P_{13}$ and $Q_{13}$ if
  necessary, we may denote the endpoints of $e_{i}$ ($i=1,2,3$) by
  $u_{i}, v_{i}$ so that the endpoints of $P_{12}$ and $P_{13}$ are
  $u_{1}, u_{2}$ and $u_{1}, u_{3}$, respectively, and similarly, the
  endpoints of $Q_{12}$ and $Q_{13}$ are $v_{1}, v_{2}$ and $v_{1},
  v_{3}$, respectively.

  Let $D_{13}$ be the closed disc bounded by $C_{13}$.  Each edge of
  $P_{1i}$ and $Q_{1i}$ ($i=2,3$) is on the boundaries of both
  $D_{1i}$ and a cuff; it follows that every non-cuff edge of $G$
  incident to an internal vertex of $P_{1i}$ or $Q_{1i}$ is entirely
  contained in the disc $D_{1i}$.  The paths $P_{12}$ and $P_{13}$ are
  subgraphs of a common cuff-cycle $C_{P}$, and $Q_{12}$ and $Q_{13}$
  are subgraphs of a common cuff-cycle $C_{Q}$.  Note that these two
  cuff-cycles could be the same.

  Recall that non-cuff edges of $G$ are independent (that is, have no
  endpoint in common). This will be used in the arguments below.  We
  claim that
  \begin{equation}
    \label{claim:disc_noncontractible}
    \begin{minipage}{0.92\textwidth}
      \textrm{every noncontractible non-cuff edge $f$ contained in
        $D_{1i}$ has one endpoint in $P_{1i}$ and the other in
        $Q_{1i}$, for each $i\in \{2,3\}$}.
    \end{minipage}
  \end{equation}
  The claim is immediate if $f\in\{e_1,e_i\}$. Now assume that
  $f\not\in\{e_{1},e _{i}\}$. The edge $f$ is incident to at least one
  of $P_{1i}$ and $Q_{1i}$ since there is no vertex in the proper
  interior of $D_{1i}$.  Without loss of generality, $f$ is incident
  to $P_{1i}$.  The edge $f$ can only be incident to internal vertices
  of $P_{1i}$, since $f$ is independent of $e_{1}$ and $e_{i}$.  Say
  $f=xy$. If $x,y\in V(P_{1i})$ then the $\{f\}$-cycle obtained by
  combining the $x$--$y$ subpath of $P_{1i}$ with the edge $f$ is
  contained in $D_{1i}$ and thus is contractible. Hence $f$ is a
  contractible non-cuff edge, a contradiction.  This proves
  \eqref{claim:disc_noncontractible}.

  First we prove the lemma in the case where $e_{3}$ is incident to
  $P_{12}$.  Since $e_{3}$ is incident to an internal vertex of
  $P_{12}$, it follows that $e_{3}$ is contained in $D_{12}$.  This
  shows the second part of the lemma. To show that $e_{2}$ and $e_{3}$
  are homotopic, consider the endpoint $v_{3}$ of $e_{3}$.  Since
  $e_{3}$ is in $D_{12}$ and $u_{3} \in V(P_{12})$, we have $v_{3} \in
  V(Q_{12})$ by \eqref{claim:disc_noncontractible}.  Now, combining
  the $u_{2}$--$u_{3}$ subpath of $P_{12}$ and the $v_{2}$--$v_{3}$
  subpath of $Q_{12}$ with $e_{2}$ and $e_{3}$, we obtain an
  $\{e_{2},e_{3}\}$-cycle contained in $D_{12}$, which is thus
  contractible.  This shows that $e_{2}$ and $e_{3}$ are homotopic.

  By symmetry, the above argument also handles the case where $e_{3}$
  is incident to $Q_{12}$.  Thus we may assume that $e_{3}$ is
  incident to neither $P_{12}$ nor $Q_{12}$.

  Suppose $P_{12} \subseteq P_{13}$.  Then, by
  \eqref{claim:disc_noncontractible}, all noncontractible non-cuff
  edges contained in $D_{12}$ are incident to $P_{12}$, and thus also
  to $P_{13}$. Hence they are all contained in the disc $D_{13}$.
  Moreover, a contractible $\{e_{2}, e_{3}\}$-cycle can be found in
  the obvious way.  Therefore the lemma holds in this case.  Using
  symmetry, the same argument can be used if $P_{12} \subseteq
  Q_{13}$, $Q_{12} \subseteq P_{13}$, or $Q_{12} \subseteq Q_{13}$.
  Thus we may assume
  \begin{equation}
    \label{eq:1213}
    P_{12} \not \subseteq P_{13}; \quad P_{12} \not \subseteq Q_{13};
    \quad Q_{12} \not \subseteq P_{13}; \quad Q_{12} \not \subseteq
    Q_{13}.
  \end{equation}

  Next consider $P_{12}$ and $P_{13}$.  If we orient these paths
  starting at $u_{1}$, then they either go in the same direction
  around $C_{P}$, or in opposite directions.  Suppose the former. Then
  one path is a subpath of the other. Since by our assumption $u_{3}$
  is not in $P_{12}$, we have $P_{12}\subseteq P_{13}$, which
  contradicts \eqref{eq:1213}.  Hence the paths $P_{12}$ and $P_{13}$
  go in opposite directions around $C_{P}$.  If $V(P_{12}) \cap
  V(P_{13}) \neq \{u_{1}\}$, then $u_{3}$ is an internal vertex of
  $P_{12}$, which contradicts our assumption on $e_{3}$.  Hence
  \begin{equation}
    \label{eq:homotopic1}
    V(P_{12}) \cap V(P_{13}) = \{u_{1}\}.
  \end{equation}

  By symmetry, the above argument show that $Q_{12}$ and $Q_{13}$ go
  in opposite directions around $C_{Q}$ (starting from $v_{1}$), which
  similarly implies
  \begin{equation}
    \label{eq:homotopic2}
    V(Q_{12}) \cap V(Q_{13}) = \{v_{1}\}.
  \end{equation}

  Now consider $P_{12}$ and $Q_{13}$. These two paths do not share any
  endpoint.  If $C_{P} \neq C_{Q}$ then obviously the two paths are
  vertex-disjoint.  If $C_{P} = C_{Q}$ and $V(P_{12})\cap V(Q_{13})
  \neq \emptyset$, then at least one of $v_{1}$ and $v_{3}$ is an
  internal vertex of $P_{12}$, because otherwise $P_{12} \subseteq
  Q_{13}$, which contradicts \eqref{eq:1213}.  However $v_{1} \notin
  V(P_{12})$ since $v_{1} \in V(Q_{12})$, and $v_{3} \notin V(P_{12})$
  by our assumption that $e_{3}$ is not incident to $P_{12}$.  Hence,
  in all cases,
  \begin{equation}
    \label{eq:homotopic3}
    V(P_{12}) \cap V(Q_{13}) = \emptyset.
  \end{equation}
  By symmetry,
  \begin{equation}
    \label{eq:homotopic4}
    V(Q_{12}) \cap V(P_{13}) = \emptyset.
  \end{equation}

  It follows from \eqref{eq:homotopic1}--\eqref{eq:homotopic4} that
  $C_{12}$ and $C_{13}$ only have $e_{1}$ in common. This implies in
  turn that $D_{12}$ and $D_{13}$ have disjoint proper interiors.
  Thus the cycle $C_{23} := (C_{12} \cup C_{13}) - e_{1}$ bounds the
  disc obtained by gluing $D_{12}$ and $D_{13}$ along $e_{1}$. Hence
  $C_{23}$ is an $\{e_{2}, e_{3}\}$-cycle of $G$ bounding a disc
  containing $e_{3}$ and all edges contained in $D_{12}$. This
  concludes the proof.
\end{proof}


The next lemma is a direct consequence of Lemma~\ref{lem:equivalence}.
An equivalence class $\mathcal{Q}$ for the homotopy relation on the
noncontractible non-cuff edges of $G$ is \DEF{trivial} if
$|\mathcal{Q}|=1$, and \DEF{non-trivial} otherwise.

\begin{lemma}
  \label{lem:disc}
  Let $G$ be a $(g,c)$-embedded graph and let $\mathcal{Q}$ be a
  non-trivial equivalence class of the noncontractible non-cuff edges
  of $G$.  Then there are distinct edges $e,f\in\mathcal{Q}$ and a
  contractible $\{e,f\}$-cycle $C$ of $G$, such that the closed disc
  bounded by $C$ contains every edge in $\mathcal{Q}$.
\end{lemma}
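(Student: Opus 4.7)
The plan is to apply Lemma~\ref{lem:equivalence} iteratively and conclude by a maximality argument, exactly as suggested by the phrase ``direct consequence'' in the text. Since $\mathcal{Q}$ is non-trivial, there exist distinct edges $e,f\in\mathcal{Q}$ and, by definition of the homotopy relation, some contractible $\{e,f\}$-cycle $C$ in $G$ bounding a closed disc $D$. I would consider all triples $(e,f,C)$ of this form (with $e,f\in\mathcal{Q}$ distinct and $C$ a contractible $\{e,f\}$-cycle) and pick one maximizing the quantity $\mu(e,f,C):=|\{h\in\mathcal{Q}:h\subseteq D\}|$; since $\mathcal{Q}$ is finite and $\mu\leq|\mathcal{Q}|$, this maximum is attained.

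The key claim is that at such a maximizer, $D$ already contains every edge of $\mathcal{Q}$, which is exactly the conclusion sought. I would prove this by contradiction: suppose some $e'\in\mathcal{Q}$ lies outside $D$. Setting $e_1:=e$, $e_2:=f$, $e_3:=e'$, and $C_{12}:=C$ places us precisely in the hypotheses of Lemma~\ref{lem:equivalence}, since the three edges are pairwise homotopic as members of the equivalence class $\mathcal{Q}$. That lemma then supplies distinct indices $i,j\in\{1,2,3\}$ and a contractible $\{e_i,e_j\}$-cycle $C'$ bounding a closed disc $D'$ that contains $e_1,e_2,e_3$ together with \emph{all} noncontractible non-cuff edges of $G$ contained in $D$. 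Hence $\mu(e_i,e_j,C')\geq\mu(e,f,C)+1$, contradicting the choice of $(e,f,C)$.

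I do not anticipate any real obstacle; the only point worth pausing on is the bookkeeping: one must observe that every element of $\mathcal{Q}$ is by definition a noncontractible non-cuff edge, so each $h\in\mathcal{Q}$ with $h\subseteq D$ is retained inside $D'$ via the ``all noncontractible non-cuff edges of $G$ contained in $D_{12}$'' clause of Lemma~\ref{lem:equivalence}. The same argument can equivalently be presented as a finite iteration rather than a maximality proof: starting from any contractible $\{e,f\}$-cycle with $e,f\in\mathcal{Q}$, one repeatedly picks an uncaptured edge $e'\in\mathcal{Q}$ and applies Lemma~\ref{lem:equivalence} to replace the current cycle with one enclosing strictly more edges of $\mathcal{Q}$; the process must terminate in at most $|\mathcal{Q}|$ steps, yielding the desired cycle.
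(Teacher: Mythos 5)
Your argument is correct and is precisely the intended derivation: the paper presents Lemma~\ref{lem:disc} as ``a direct consequence of Lemma~\ref{lem:equivalence}'' without spelling out the details, and the maximality (equivalently, finite iteration) argument you give is the natural way to fill them in. The one bookkeeping point you flag --- that every $h\in\mathcal{Q}$ is by definition a noncontractible non-cuff edge, so the clause in Lemma~\ref{lem:equivalence} guarantees $h\subseteq D'$ whenever $h\subseteq D$ --- is indeed the step that makes $\mu$ strictly increase, and you handle it correctly.
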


Our main tool in proving Theorem~\ref{thm:upper_bound} is the
following lemma, whose inductive proof is enabled by the following
definition. Let $G$ be a $(g,c)$-embedded graph and let $k\geq 1$.  A
graph $H$ is a \DEF{\wminor{k}} of $G$ if there exists an
$(H,4k)$-model $\{S_{x}: x\in V(H)\}$ in $G$ such that, for every
vertex $u\in V(G)$ incident to a noncontractible non-cuff edge in a
non-trivial equivalence class, the number of subgraphs in the model
including $u$ is at most $k$.  Such a collection $\{S_{x}: x\in
V(H)\}$ is said to be a \DEF{\wmodel{k}} of $H$ in $G$.  This
provides a relaxation of the notion of $(H,k)$-minor since some
vertices of $G$ could appear in up to $4k$ branch sets (instead of
$k$).  We emphasize that this definition depends heavily on the
embedding of $G$.


\begin{lemma}
  \label{lem:main_tool}
  Let $G$ be a $(g,c)$-embedded graph and let $k\geq 1$. Then every
  {\wminor{k}} $H$ of $G$ has minimum degree at most $\f{k}{c}{g}$.
\end{lemma}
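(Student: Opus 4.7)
The plan is to prove the lemma by induction on $g+c$, possibly refined by a secondary count of noncontractible non-cuff edges; since $c\geq 1$ we have $g+c\geq 1$, and the target in each step is $\delta(H)\leq\f{k}{c}{g}$.

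For the base case $g+c=1$, we have $g=0$ and $c=1$, so $G$ is $2$-cell embedded in a closed disc with every vertex on the single cuff-cycle. Every cycle in a closed disc is contractible, so there are no noncontractible non-cuff edges, the ``non-trivial class'' restriction is vacuous, and a \wmodel{k} is just an ordinary $(H,4k)$-model; hence $H$ is a minor of $\lex{G}{4k}$ by Lemma~\ref{lem:lex_prod_k_minor}. Since $G$ is outerplanar with $\Delta(G)\leq 3$, $\tw(G)\leq 2$, so Lemma~\ref{lem:treewidth} yields $\delta(H)\leq 4k\cdot 2+4k-1=12k-1$, well inside $\f{k}{c}{g}=48k$.

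For the inductive step $g+c\geq 2$, the strategy is to cut the surface along a carefully chosen cycle, apply the inductive hypothesis to the resulting smaller embedded graph, and absorb the loss using the square-root slack in the target bound. I would split into two subcases. Either some equivalence class $\mathcal{Q}$ of noncontractible non-cuff edges is non-trivial, in which case Lemma~\ref{lem:disc} supplies a contractible $\{e,f\}$-cycle $C$ bounding a closed disc $D$ that contains every edge of $\mathcal{Q}$; I would cut along $C$, obtaining an ``inside'' piece in a disc (which is handled by the base case) and an ``outside'' piece in which the whole class $\mathcal{Q}$ has been collapsed, and argue that the collapse reduces the appropriate complexity measure. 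Or every noncontractible non-cuff edge lies in a trivial class, in which case I would pick a representative edge $e$, extend it to a noncontractible cycle through arcs of the two adjacent cuff-cycles, and cut along that cycle, reducing the Euler genus by at least one.

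In either subcase, the \wmodel{k} on $G$ is repackaged as a \wmodel{k} on the smaller embedded graph by splitting each branch set that meets the cut; this inflates the multiplicity of cycle vertices by a bounded factor, and the gap between the $4k$ budget allowed at ordinary vertices and the tighter $k$ budget enforced at vertices incident to non-trivial-class noncontractible edges is precisely the slack that keeps the new multiplicities within the $4k$ ceiling. The inductive hypothesis then produces a branch set of degree at most $\f{k}{c'}{g'}$ in the smaller \wminor{k}, and the arithmetic $48k\sqrt{g'+c'}+O(k)\leq 48k\sqrt{g+c}$ closes the induction via the square-root gap. The main obstacle, I expect, will be the first subcase: verifying that the disc-cut leaves the outside embedding $(g,c)$-admissible, that no new noncontractible non-cuff edges are created by the surgery, and that the constant $48$ closes on the nose.
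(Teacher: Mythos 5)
Your plan has a fatal arithmetic flaw that makes the induction, as you have set it up, unable to close. You propose to cut the surface so that $g'+c' < g+c$ and then ``close the induction via the square-root gap'' using $48k\sqrt{g'+c'}+O(k)\leq 48k\sqrt{g+c}$. But
$$48k\bigl(\sqrt{g+c}-\sqrt{g+c-1}\bigr)=\frac{48k}{\sqrt{g+c}+\sqrt{g+c-1}}\approx \frac{24k}{\sqrt{g+c}}\ ,$$
which tends to $0$ as $g+c\to\infty$. So there is no $O(k)$ of slack per unit drop in $g+c$; if each surface cut costs you even a bounded additive amount, the resulting bound is $O(k(g+c))$, not $O(k\sqrt{g+c})$. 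The square root in the target is \emph{not} produced by the induction at all.

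The paper's induction is structured quite differently: it is on $g+c$, then on the number $q(G)$ of non-trivial homotopy classes, then on $|V(G)|$, and the cases that do the work (a contractible non-cuff edge, a non-trivial homotopy class, three consecutive degree-$2$ vertices) all keep $(g,c)$ \emph{fixed} while reducing $q(G)$ or $|V(G)|$. The $\sqrt{g+c}$ appears only in the terminal case, when none of these reductions apply: there one shows via Euler's formula that every face contains at least $3$ occurrences of non-cuff edges, so the number $t$ of non-cuff edges satisfies $t\leq 3(c+g)$, whence $|E(G)|\leq 27(c+g)$, whence $|E(G[4k])|\leq 2(24k)^2(c+g)$, and finally $\delta(H)\leq\sqrt{2|E(H)|}\leq 48k\sqrt{c+g}$. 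That global sparsity bound is the actual source of the square root; it is not recoverable by absorbing losses step by step.

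Your second subcase is also shaky for a different reason. When every homotopy class is trivial, the endpoints of the non-cuff edges already carry the full $4k$ budget, so the $4k$-versus-$k$ slack you are counting on is not there. (In the paper's Case~2 the contraction works precisely because the vertices incident to the non-trivial class carry only a $k$ budget, and merging four of them into two stays within $4k$.) Moreover, a single noncontractible non-cuff edge whose endpoints lie on two distinct cuff-cycles cannot be completed to a cycle using only arcs of those two cuff-cycles, so the cut cycle you describe need not exist. You are also missing the reductions the paper uses to reach the sparse endgame: handling contractible non-cuff edges, handling graphs with at most one non-cuff edge, and contracting runs of consecutive degree-$2$ cuff vertices. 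Without those, the terminal counting argument is unavailable and the induction has no viable base beyond your $g+c=1$ case.
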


\begin{proof}
  Let $q(G)$ be the number of non-trivial equivalence classes of
  noncontractible non-cuff edges in $G$.  We proceed by induction,
  firstly on $g+c$, then on $q(G)$, and then on $|V(G)|$.  Now $G$ is
  embedded in a surface of Euler genus $g' \leq g$ with $c'\leq c$
  cuffs added. If $g'<g$ or $c'<c$ then we are done by induction. Now
  assume that $g'=g$ and $c'=c$.


  We repeatedly use the following observation: If $C$ is a
  contractible cycle of $G$, then the subgraph of $G$ consisting of
  the vertices and edges contained in the closed disc $D$ bounded by
  $C$ is outerplanar, and thus has treewidth at most 2.  This is
  because the proper interior of $D$ contains no vertex of $G$ (since
  all the vertices in $G$ are on the cuff boundaries).

  Let $\{S_{x}: x\in V(H)\}$ be a {\wmodel{k}} of $H$ in $G$.  Let $d$
  be the minimum degree of $H$.  We may assume that $d\geq 20k$, as
  otherwise $d\leq 48k\sqrt{c+g}$   (since $c\geq 1$)   and we are
  done. Also, it is enough to prove the lemma when 
$H$ is connected, so assume this is the case.


  \case{1}{Some non-cuff edge $e$ of $G$ is contractible.}  Let $C$ be
  a contractible $\{e\}$-cycle. Let $u,v$ be the endpoints of $e$.
  Remove from $G$ every vertex in $V(C) \setminus \{u,v\}$ and modify
  the embedding of $G$ by redrawing the edge $e$ where the path $C-e$
  was. Thus $e$ becomes a cuff-edge in the resulting graph $G'$, and
  $u$ and $v$ both have degree $2$. Also observe that $G'$ is
  connected and remains simple (that is, this operation does not
  create loops or parallel edges).  Since the embedding of $G'$ is
  $2$-cell, $G'$ is $(g,c)$-embedded also.

  If $e_{1}$ and $e_{2}$ are noncontractible non-cuff edges of $G'$
  that are homotopic in $G'$, then $e_{1}$ and $e_{2}$ were also
  noncontractible and homotopic in $G$. Hence, $q(G') \leq q(G)$.
  Also, $|V(G')| < |V(G)|$ since we removed at least one vertex from
  $G$.  Thus, by induction, every {\wminor{k}} of $G'$ has minimum
  degree at most $\f{k}{c}{g}$. Therefore, it is enough to show that
  $H$ is also a {\wminor{k}} of $G'$.

  Let $G_{1}$ be the subgraph of $G$ lying in the closed disc bounded
  by $C$; observe that $G_{1}$ is outerplanar. Moreover, $(G_{1},G')$
  is a separation of $G$ with $V(G_{1}) \cap V(G') = \{u,v\}$. (That
  is, $G_{1} \cup G' = G$ and $V(G_{1}) \setminus V(G')\neq\emptyset$
  and $V(G') \setminus V(G_{1}) \neq\emptyset$.)

  First suppose that $S_{x} \subseteq G_{1} \setminus \{u,v\}$ for
  some vertex $x\in V(H)$.  Let $H'$ be the subgraph of $H$ induced by
  the set of such vertices $x$.  In $H$, the only neighbors of a
  vertex $x\in V(H')$ that are not in $H'$ are vertices $y$ such that
  $S_{y}$ includes at least one of $u, v$.  There are at most $2\cdot
  4k = 8k$ such branch sets $S_{y}$. Hence, $H'$ has minimum degree at
  least $d - 8k \geq 12k$.  However, $H'$ is a minor of $G_{1}[4k]$
  and hence has minimum degree at most $4k\cdot\tw(G_{1}) + 4k - 1
  \leq 12k - 1$ by Lemma~\ref{lem:treewidth}, a contradiction.

  It follows that every branch set $S_{x}$ ($x\in V(H)$) contains at
  least one vertex in $V(G')$.  Let $S'_{i} := S_{i} \cap G'$. Using
  the fact that $uv \in E(G')$, it is easily seen that the collection
  $\{S'_{x}: x \in V(H)\}$ is a {\wmodel{k}} of $H$ in $G'$.

  \case{2}{Some equivalence class $\mathcal{Q}$ is non-trivial.}  By
  Lemma~\ref{lem:disc}, there are two edges $e,f \in\mathcal{Q}$ and a
  contractible $\{e,f\}$-cycle $C$ such that every edge in
  $\mathcal{Q}$ is contained in the disc bounded by $C$.  Let $P_{1},
  P_{2}$ be the two components of $C \setminus \{e,f\}$.  These two
  paths either belong to the same cuff-cycle or to two distinct
  cuff-cycles of $G$.

  Our aim is to eventually contract each of $P_{1}, P_{2}$ into a
  single vertex. However, before doing so we slightly modify $G$ as
  follows. For each cuff-cycle $C^{*}$ intersecting $C$, select an
  arbitrary edge in $E(C^{*}) \setminus E(C)$ and subdivide it {\em
    twice}.  Let $G'$ be the resulting $(g,c)$-embedded graph.
  Clearly $q(G') = q(G)$, and there is an obvious {\wmodel{k}}
  $\{S'_{x}:x\in V(H)\}$ of $H$ in $G'$: simply apply the same
  subdivision operation on the branch sets $S_{x}$.

  Let $G'_{1}$ be the subgraph of $G'$ lying in the closed disc $D$
  bounded by $C$.  Observe that $G'_{1}$ is outerplanar with
  outercycle $C$.  Suppose that some edge $xy$ in $E(G'_{1}) \setminus
  E(C)$ has both its endpoints in the same path $P_{i}$, for some
  $i\in \{1,2\}$.  Then the cycle obtained by combining $xy$ and the
  $x$--$y$ path in $P_{i}$ is a contractible cycle of $G'$, and its
  only non-cuff edge is $xy$.  The edge $xy$ is thus a contractible
  edge of $G'$, and hence also of $G$, a contradiction.

  It follows that every non-cuff edge included in $G'_{1}$ has one
  endpoint in $P_{1}$ and the other in $P_{2}$. Hence, every such edge
  is homotopic to $e$ and therefore belongs to $\mathcal{Q}$.

  Consider the {\wmodel{k}} $\{S'_{x}:x\in V(H)\}$ of $H$ in $G'$
  mentioned above.  Let $e=uv$ and $f=u'v'$, with $u, u' \in V(P_{1})$
  and $v, v' \in V(P_{2})$.  Let $X := \{u, u', v, v'\}$. For each
  $w\in X$, the number of branch sets $S'_{x}$ that include $w$ is at
  most $k$, since $e$ and $f$ are homotopic noncontractible non-cuff
  edges.

  Let $J := G'_{1} \setminus X$.  Note that $\tw(J) \leq 2$ since
  $G'_1$ is outerplanar. Let $Z := \{x \in V(H): S'_{x} \subseteq
  J\}$.  First, suppose that $Z\neq\emptyset$.  Every vertex of $J$ is
  in at most $4k$ branch sets $S'_{x}$ ($x\in Z$).  It follows that
  the induced subgraph $H[Z]$ is a minor of $\lex{J}{4k}$.  Thus, by
  Lemma~\ref{lem:treewidth}, $H[Z]$ has a vertex $y$ with degree at
  most $4k \cdot \tw(J) + 4k - 1 \leq 4k \cdot 2 + 4k - 1 =12k - 1$.
  Consider the neighbors of $y$ in $H$.  Since $X$ is a cutset of $G'$
  separating $V(J)$ from $G' \setminus V(G'_{1})$, the only neighbors
  of $y$ in $H$ that are not in $H[Z]$ are vertices $x$ such that
  $V(S'_{x}) \cap X \neq \emptyset$. As mentioned before, there are at
  most $4k$ such vertices; hence, $y$ has degree at most $12k - 1 + 4k
  = 16k - 1$. However this contradicts the assumption that $H$ has
  minimum degree $d \geq 20k$.  Therefore, we may assume that
  $Z=\emptyset$; that is, every branch set $S'_{x}$ ($x \in V(H)$)
  intersecting $V(G'_{1})$ contains some vertex in $X$.

  Now, remove from $G'$ every edge in $\mathcal{Q}$ except $e$, and
  contract each of $P_{1}$ and $P_{2}$ into a single vertex. Ensuring
  that the contractions are done along the boundary of the relevant
  cuffs in the embedding.  This results in a graph $G''$ which is
  again $(g,c)$-embedded. Note that $G''$ is guaranteed to be simple,
  thanks to the edge subdivision operation that was applied to $G$
  when defining $G'$.

  If a non-cuff edge is contractible in $G''$ then it is also
  contractible in $G'$, implying all non-cuff edges in $G''$ are
  noncontractible. Two non-cuff edges of $G''$ are homotopic in $G''$
  if and only if they are in $G'$.  It follows $q(G'') = q(G') - 1 =
  q(G)-1$, since $e$ is not homotopic to another non-cuff edge in
  $G''$.  By induction, every {\wminor{k}} of $G''$ has minimum degree
  at most $\f{k}{c}{g}$. Thus, it suffices to show that $H$ is also a
  {\wminor{k}} of $G''$.

  For $x \in V(H)$, let $S''_{x}$ be obtained from $S'_{x}$ by
  performing the same contraction operation as when defining $G''$
  from $G'$: every edge in $\mathcal{Q}\setminus\{e\}$ is removed and
  every edge in $E(P_{1}) \cup E(P_{2})$ is contracted.  Using that
  every subgraph $S'_{x}$ either is disjoint from $V(G'_{1})$ or
  contains some vertex in $X$, it can be checked that $S''_{x}$ is
  connected.

  Consider an edge $xy \in E(H)$. We now show that the two subgraphs
  $S''_{x}$ and $S''_{y}$ touch in $G''$. Suppose $S'_{x}$ and
  $S'_{y}$ share a common vertex $w$.  If $w \notin V(G'_{1})$, then
  $w$ is trivially included in both $S''_{x}$ and $S''_{y}$.  If $w
  \in V(G'_{1})$, then each of $S'_{x}$ and $S'_{y}$ contain a vertex
  from $X$, and hence either $u$ or $v$ is included in both $S''_{x}$
  and $S''_{y}$, or $u$ is included in one and $v$ in the other. In
  the latter case $uv$ is an edge of $G''$ joining $S''_{x}$ and
  $S''_{y}$.  Now assume $S'_{x}$ and $S'_{y}$ are
  vertex-disjoint. Thus there is an edge $ww' \in E(G')$ joining these
  two subgraphs in $G'$. Again, if neither $w$ nor $w'$ belong to
  $V(G'_{1})$, then obviously $ww'$ joins $S''_{x}$ and $S''_{y}$ in
  $G''$. If $w, w' \in V(G'_{1})$, then each of $S'_{x}$ and $S'_{y}$
  contain a vertex from $X$, and we are done exactly as previously.
  If exactly one of $w, w'$ belongs to $V(G'_{1})$, say $w$, then $w
  \in X$ and $w'$ is the unique neighbor of $w$ in $G'$ outside
  $V(G'_{1})$. The contraction operation naturally maps $w$ to a
  vertex $m(w) \in \{u, v\}$. The edge $w'm(w)$ is included in $G''$
  and thus joins $S''_{x}$ and $S''_{y}$.

  In order to conclude that $\{S''_{x}:x\in V(H)\}$ is a {\wmodel{k}}
  of $H$ in $G''$, it remains to show that, for every vertex $w\in
  V(G'')$, the number of branch sets including $w$ is at most $4k$,
  and is at most $k$ if $w$ is incident to a non-cuff edge homotopic
  to another non-cuff edge.  This condition is satisfied if $w \notin
  \{u, v\}$, because two non-cuff edges of $G''$ are homotopic in
  $G''$ if and only if they are in $G'$. Thus assume $w \in \{u, v\}$.
  By the definition of $G''$, the edge $e=uv$ is {\em not} homotopic
  to another non-cuff edge of $G''$. Moreover, for each $z \in X$,
  there are at most $k$ branch sets $S'_{x}$ ($x\in V(H)$) containing
  $z$. Since $|X|=4$, it follows that there are at most $4k$ branch
  sets $S''_{x}$ ($x\in V(H)$) containing $w$. Therefore, the
  condition holds also for $w$, and $H$ is a {\wminor{k}} of $G''$.

  \case{3}{There is at most one non-cuff edge.}  Because $G$ is
  connected, this implies that $G$ consists either of a unique
  cuff-cycle, or of two cuff-cycles joined by a non-cuff edge.  In
  both cases, $G$ has treewidth exactly $2$.  Since $H$ is a minor of
  $G[4k]$, Lemma~\ref{lem:treewidth} implies that $H$ has minimum
  degree at most $4k \cdot \tw(G) + 4k - 1 = 12k - 1\leq \f{k}{c}{g}$,
  as desired.

  \case{4}{Some cuff-cycle $C$ contains three consecutive degree-$2$
    vertices.}  Let $u,v,w$ be three such vertices (in order). Note
  that $C$ has at least four vertices, as otherwise $G=C$ and the
  previous case would apply. It follows $uw \notin E(G)$.  Let $G'$ be
  obtained from $G$ by contracting the edge $uv$ into the vertex $u$.
  In the embedding of $G'$, the edge $uw$ is drawn where the path
  $uvw$ was; thus $uw$ is a cuff-edge, and $G'$ is $(g, c)$-embedded.
  We have $q(G') = q(G)$ and $|V(G')| < |V(G)|$, hence by induction,
  $G'$ satisfies the lemma, and it is enough to show that $H$ is a
  {\wminor{k}} of $G'$.

  Consider the {\wmodel{k}} $\{S_{x}: x\in V(H)\}$ of $H$ in $G$.  If
  $V(S_{x})=\{v\}$ for some $x \in V(H)$, then $x$ has degree at most
  $3\cdot 4k - 1 = 12k - 1$ in $H$, because $xy \in E(H)$ implies that
  $S_{y}$ contains at least one of $u, v, w$.  However this
  contradicts the assumption that $H$ has minimum degree $d \geq 20k$.
  Thus every branch set $S_{x}$ that includes $v$ also contains at
  least one of $u,w$ (since $S_{x}$ is connected).

  For $x\in V(H)$, let $S'_{x}$ be obtained from $S_{x}$ as expected:
  contract the edge $uv$ if $uv \in E(S_{x})$. Clearly $S'_{x}$ is
  connected.  Consider an edge $xy \in E(H)$. If $S_{x}$ and $S_{y}$
  had a common vertex then so do $S'_{x}$ and $S'_{y}$. If $S_{x}$ and
  $S_{y}$ were joined by an edge $e$, then either $e$ is still in $G'$
  and joins $S'_{x}$ and $S'_{y}$, or $e=uv$ and $u \in V(S'_{x}),
  V(S'_{y})$. Hence in each case $S'_{x}$ and $S'_{y}$ touch in $G'$.
  Finally, it is clear that $\{S'_{x}: x\in V(H)\}$ meets remaining 
requirements to be a {\wmodel{k}} of $H$ in $G'$, since
  $V(S'_{x}) \subseteq V(S_{x})$ for every $x\in V(H)$ and the
  homotopy properties of the non-cuff edges have not changed.
  Therefore, $H$ is a {\wminor{k}} of $G'$.

  \case{5}{None of the previous cases apply.}  Let $t$ be the number
  of non-cuff edges in $G$ (thus $t\geq 2$).  Since there are no three
  consecutive degree-$2$ vertices, every cuff edge is at distance at
  most $1$ from a non-cuff edge.  It follows that
  \begin{equation}
    \label{eq:total_vs_non-cuff}
    |E(G)| \leq 9t.
  \end{equation}
  (This inequality can be improved but is good enough for our
  purposes.)

  For a facial walk $F$ of the embedded graph $G$, let $\nc(F)$ denote
  the number of occurrences of non-cuff edges in $F$. (A non-cuff edge
  that appears twice in $F$ is counted twice.)\ We claim that $\nc(F)
  \geq 3$.  Suppose on the contrary that $\nc(F)\leq 2$.

  First suppose that $F$ has no repeated vertex. Thus $F$ is a cycle.
  If $\nc(F) = 0$, then $F$ is a cuff-cycle, every vertex of which is
  not incident to a non-cuff edge, contradicting the fact that $G$ is
  connected with at least two non-cuff edges.  If $\nc(F)=1$ then $F$
  is a contractible cycle that contains exactly one non-cuff edge
  $e$. Thus $e$ is contractible, and Case~1 applies.  If $\nc(F)=2$
  then $F$ is a contractible cycle containing exactly two non-cuff
  edges $e$ and $f$. Thus $e$ and $f$ are homotopic. Hence there is a
  non-trivial equivalence class, and Case~2 applies.

  Now assume that $F$ contains a repeated vertex $v$. Let
$$F=(x_1,x_2,\dots,x_{i-1},x_i=v,x_{i+1},x_{i+2},\dots,x_{j-1},x_j=v)\enspace.$$ All of $x_1,x_{i-1},x_{i+1},x_{j-1}$ are
adjacent to $v$.  Since $x_1\neq x_{j-1}$ and $x_{i-1}\neq x_{i+1}$
and $\deg(v)\leq 3$, we have $x_{i+1}=x_{j-1}$ or
$x_1=x_{i-1}$. Without loss of generality, $x_{i+1}=x_{j-1}$. Thus the
path $x_{i-1}vx_1$ is in the boundary of the cuff-cycle $C$ that
contains $v$.  Moreover, the edge $vx_{i+1}=vx_{j-1}$ counts twice in
$\nc(F)$. Since $\nc(F)\leq 2$, every edge on $F$ except $vx_{i+1}$
and $vx_{j-1}$ is a cuff-edge.  Thus every edge in the walk $v, x_1,
x_2,\dots,x_{i-1}, x_i=v$ is in $C$, and hence
$v,x_1,x_2,\dots,x_{i-1},x_i = v$ is the cycle $C$.  Similarly,
$x_{i+1},x_{i+2},\dots,x_{j-2},x_{j-1}=x_{i+1}$ is a cycle $C'$
bounding some other cuff.  Hence $vx_{i+1}$ is the only non-cuff edge
incident to $C$, and the same for $C'$. Therefore $G$ consists of two
cuff-cycles joined by a non-cuff edge, and Case~3 applies.



Therefore, $\nc(F) \geq 3$, as claimed.

Let $n := |V(G)|$, $m := |E(G)|$, and $f$ be the number of faces of
$G$.  It follows from Euler's formula that
\begin{equation}
  \label{eq:Euler}
  n - m + f + c  = 2 - g.
\end{equation}
Every non-cuff edge appears exactly twice in faces of $G$ (either
twice in the same face, or once in two distinct faces). Thus
\begin{equation}
  \label{eq:bound_t}
  2t = \sum_{F \text{ face of } G} \nc(F) \geq 3f.
\end{equation}
Since $n = m - t$, we deduce from~\eqref{eq:Euler}
and~\eqref{eq:bound_t} that
$$
t = f + c + g - 2 \leq \tfrac{2}{3}t + c + g - 2\enspace.
$$
Thus $t \leq 3(c + g)$, and $m \leq 9t \leq 27(c + g)$ by
\eqref{eq:total_vs_non-cuff}. This allows us, in turn, to bound the
number of edges in $G[4k]$:
$$
|E(G[4k])| = \tbinom{4k}{2}n + (4k)^2m \leq (4k)^2 \cdot 2m \leq 54
(4k)^2 (c + g) \leq 2 (24k)^2 (c + g).
$$
Since $H$ is a minor of $G[4k]$, we have $|E(H)| \leq |E(G[4k])|$.
Thus the minimum degree $d$ of $H$ can be upper bounded as follows:
$$
2|E(H)| \geq d|V(H)| \geq d^{2},
$$
and hence
$$
d \leq \sqrt{2|E(H)|} \leq \sqrt{2|E(G[4k])|} \leq \sqrt{2\cdot 2
  (24k)^2 (c + g)} = \f{k}{c}{g},
$$
as desired.
\end{proof}

Now we put everything together and prove
Theorem~\ref{thm:upper_bound}.

\begin{proof}[Proof of Theorem~\ref{thm:upper_bound}.]
  Let $G \in \G(g,p,k)$.  By Lemma~\ref{lem:combined_lemma}, there
  exists a $(g,p)$-embedded graph $G'$ with $$\eta(G) \leq
  \eta(\lex{G'}{k+1})+\sqrt{6g} + 4\enspace.$$ Let
  $t:=\eta(\lex{G'}{k+1})$. Thus $K_{t}$ is a {\wminor{(k+1)}} of $G'$
  by Lemma~\ref{lem:lex_prod_k_minor}. Lemma~\ref{lem:main_tool} with
  $H=K_{t}$ implies that
$$
\eta(\lex{G'}{k+1}) - 1 = t - 1 \leq 48(k+1)\sqrt{g+p}\enspace.
$$ 
Hence $\eta(G) \leq 48(k+1)\sqrt{g+p} +\sqrt{6g} + 5$, as desired.
\end{proof}

\section{Constructions}
\label{sec:Constructions}

This section describes constructions of graphs in $\G(g,p,k,a)$ that
contain large complete graph minors.  The following lemma, which in
some sense, is converse to Lemma~\ref{lem:combined_lemma} will be
useful.


\begin{lemma}
  \label{lem:ConstructVortex}
  Let $G$ be a graph embedded in a surface with Euler genus at most
  $g$. Let $F_1 ,\dots,F_p$ be pairwise vertex-disjoint facial cycles of $G$,
  such that $V (F_1 ) \cup\dots\cup V (F_p )=V(G)$. Then for all
  $k\geq1$, some graph in $\G(g,p,k)$ contains $\lex{G}{k}$ as a
  minor.
\end{lemma}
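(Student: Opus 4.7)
The plan is to construct $H \in \G(g,p,k)$ explicitly and exhibit a $\lex{G}{k}$-model in it. I would take $H_0 := G$ with its given embedding in the surface $\Sigma$, and for each facial cycle $F_i$ attach a vortex $(G_i, \Omega_i)$ with perimeter $\Omega_i$ the cyclic order of $V(F_i)$, placed inside the open disc bounded by $F_i$. The resulting graph $H := G \cup G_1 \cup \cdots \cup G_p$ automatically lies in $\G(g,p,k)$ provided each vortex has width at most $k$.

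For each $F_i = v_{i,1} v_{i,2} \cdots v_{i,n_i}$, the vortex $G_i$ is designed to produce the $k$-fold blowup of $V(F_i)$. For each $v_{i,s}$ I would introduce $k-1$ new ``copy'' vertices $v_{i,s}^{(2)}, \ldots, v_{i,s}^{(k)}$, and I set $v_{i,s}^{(1)} := v_{i,s}$. Each bag $C\langle v_{i,s}\rangle$ of the circular decomposition is taken to have size exactly $k+1$: it contains the $k$ copies of $v_{i,s}$ together with one ``bridge'' vertex shared with an adjacent bag (for example, the copy $v_{i,s+1}^{(k)}$, placed in both bag $s$ and bag $s+1$). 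All pairs of vertices in a bag are joined by vortex edges, making each bag a clique in $G_i$, consistent with the circular decomposition.

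The $\lex{G}{k}$-model is then defined by $B_{(v,j)} := \{v^{(j)}\}$; these are pairwise disjoint singletons (hence trivially connected), and the $K_k$-adjacencies at each vertex of $G$ follow immediately since all $k$ copies of $v_{i,s}$ sit together in the clique-bag $C\langle v_{i,s}\rangle$. The $K_{k,k}$-adjacencies across each edge $vw \in E(G)$ come from a combination of the edge $vw$ in $H_0 = G$ itself and the clique structure of the bags containing the bridge vertices shared between consecutive positions of the facial cycle carrying $v$ or $w$.

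The main obstacle is the careful verification that for \emph{every} edge of $G$, all $k^2$ required pairwise adjacencies between the branches of its endpoints are realized in $H$. For cycle edges of a facial cycle, the bridge vertices shared between consecutive bags supply these adjacencies; for chord edges of $F_i$ or edges connecting different facial cycles, a more delicate choice of which copies are shared with which bags---or enlarged branch sets that include paths along the facial cycle---is needed. The design of bridge vertices must simultaneously respect the bag-size constraint $k+1$ (so that the vortex width is exactly $k$) and yet supply enough inter-bag clique adjacencies to realize the $K_{k,k}$-minor at each edge of $G$, and this balancing act is the technical heart of the argument.
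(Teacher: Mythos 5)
Your approach keeps $G$ unchanged as the embedded part $H_0$ and tries to push the entire blow-up into the vortices. The obstacle you flag at the end is not a mere technicality but a genuine obstruction, and it is worth spelling out why. With bags of size exactly $k+1$ (the $k$ copies of $v$ plus one bridge vertex), a bag can contribute at most $k+1$ of the $k^2$ required adjacencies between the copies of $v$ and the copies of $w$, even when $v$ and $w$ are consecutive on a facial cycle. For a chord of $F_i$ or an edge joining two different facial cycles, the bags of $v$ and $w$ are not consecutive in any circular decomposition, so they cannot share bridge vertices at all; the only adjacency available is the single edge $vw$ of $H_0=G$, leaving $k^2-1$ of the needed adjacencies unrealised. ``Enlarging branch sets along the facial cycle'' does not help either: every branch set $B_{(u,j)}$ with $u\in V(F_i)$ would want to occupy cuff vertices near $u$, and these requests collide. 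In short, a single width-$k$ vortex attached to the unmodified $G$ simply does not have enough capacity to carry all $k^2\cdot\deg_G(v)$ adjacencies needed around a high-degree vertex $v$.

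The paper's construction sidesteps this by performing surgery on $G$ \emph{before} attaching vortices. Each edge $vw$ of $G$ is first replaced by $k^2$ parallel edges, labelled by pairs $(i,j)\in[1,k]^2$, and then each vertex is split along the faces $F_1,\dots,F_p$ so that every resulting vertex has degree at most $3$. This yields an embedded graph $H_0$ of the same genus in which every vertex on the cuff face $J_\ell$ belongs to a unique original $v\in V(F_\ell)$ and is incident to exactly one labelled edge. The vortex $H_\ell$ then only has to supply, for each original $v\in V(F_\ell)$, a clique on $k$ new vertices $(v,1),\dots,(v,k)$ together with an edge from each split copy $x$ of $v$ to each $(v,i)$; the bag $B\ang{x}$ is just $\{x\}\cup\{(v,i):i\in[1,k]\}$, of size $k+1$, giving width $k$. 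The branch set $D_{v,i}$ is the star with centre $(v,i)$ and leaves the split copies of $v$ carrying an $(i,\cdot)$ (respectively $(\cdot,i)$) label, according to a fixed total order on $V(G)$. Now the $k^2$ adjacencies across each original edge $vw$ are realised by the $k^2$ labelled parallel edges already present in $H_0$, and the vortex only has to reconnect the pieces of each split vertex. This vertex-splitting-plus-parallel-edges step is the missing idea; without it your branch sets cannot be assembled so that every required $K_{k,k}$ appears.
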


\begin{proof}
  Let $G'$ be the embedded multigraph obtained from $G$ by replacing
  each edge $vw$ of $G$ by $k^2$ edges between $v$ and $w$ bijectively
  labeled from $\{(i,j):i,j\in[1,k]\}$.  Embed these new edges
  `parallel' to the original edge $vw$.  Let $H_0$ be the splitting of
  $G'$ at $F_1,\dots,F_p$.  Edges in $H_0$ inherit their label in
  $G'$.  For each $\ell\in[1,p]$, let $J_\ell$ be the face of $H_0$
  that corresponds to $F_{\ell}$.

  \begin{figure}[!htb]
    \begin{center}
      \includegraphics[width=\textwidth]{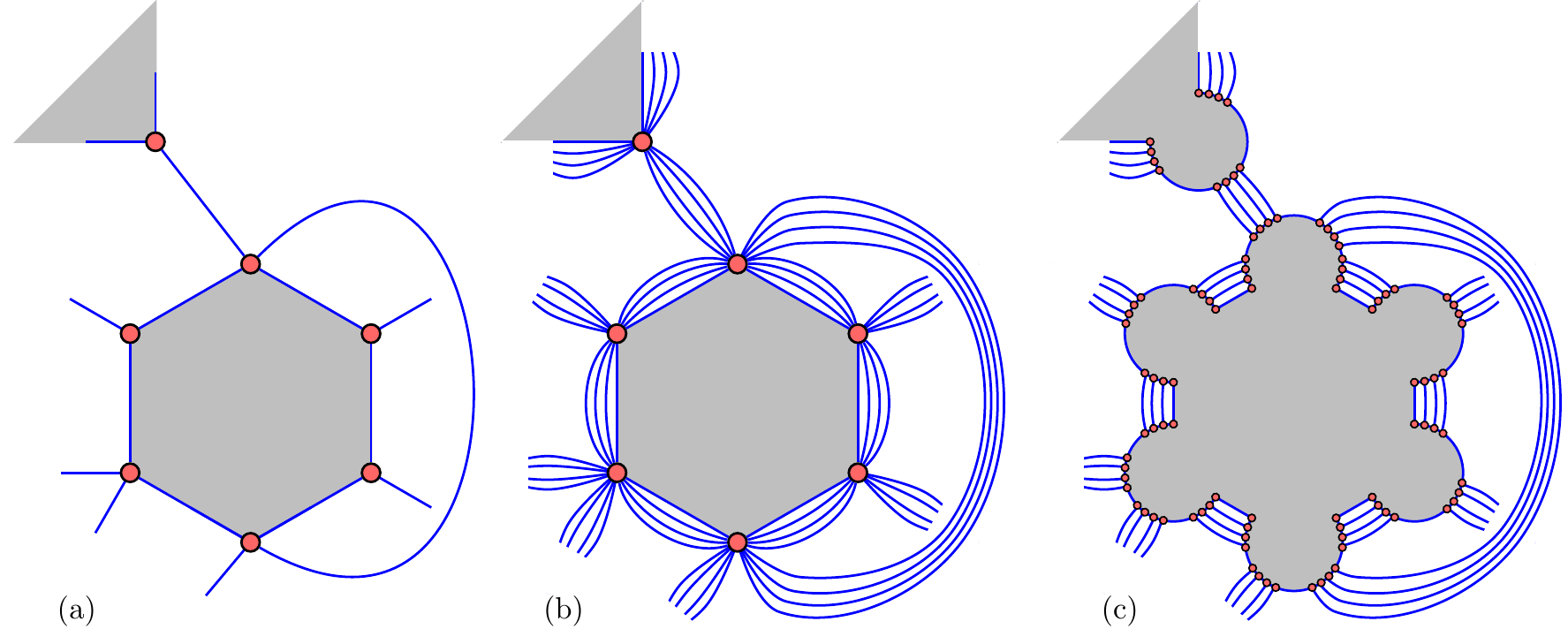}
    \end{center}
    \caption{Illustration for Lemma~\ref{lem:ConstructVortex}: (a)
      original graph $G$, (b) multigraph $G'$, (c) splitting $H_0$ of
      $G'$.}
    \label{fig:ConstructVortex}
  \end{figure}

  Let $H_{\ell}$ be the graph with vertex set
  $V(J_{\ell})\cup\{(v,i):v\in V(F_{\ell}),i\in[1,k]\}$, where:
  \begin{enumerate}[(a)]
  \item each vertex $x$ in $J_{\ell}$ that belongs to a vertex $v$ in
    $F_{\ell}$ is adjacent to each vertex $(v,i)$ in $H_{\ell}$, and
  \item vertices $(v,i)$ and $(w,j)$ in $H_{\ell}$ are adjacent if and
    only if $v=w$ and $i\neq j$.
  \end{enumerate}

  We now construct a circular decomposition $\{B\ang{x}:x\in
  V(J_{\ell})\}$ of $H_{\ell}$ with perimeter $J_{\ell}$.  For each
  vertex $x$ in $J_{\ell}$ that belongs to a vertex $v$ in $F_{\ell}$,
  let $B\ang{x}$ be the set $\{x\} \cup \{(v,i):i\in[1,k]\}$ of
  vertices in $H_{\ell}$.  Thus $|B\ang{x}|\leq k+1$.  For each
  type-(a) edge between $x$ and $(v,i)$, the endpoints are both in bag
  $B\ang{x}$.  For each type-(b) edge between $(v,i)$ and $(v,j)$ in
  $H_{\ell}$, the endpoints are in every bag $B\ang{x}$ where $x$
  belongs to $v$.  Thus the endpoints of every edge in $H_{\ell}$ are
  in some bag $B\ang{x}$.  Thus $\{B\ang{x}:x\in V(J_{\ell})\}$ is a
  circular decomposition of $H_{\ell}$ with perimeter $J_{\ell}$ and
  width at most $k$.

  Let $H$ be the graph $H_0\cup H_1\cup\dots\cup H_p$. Thus
  $V(H_0)\cap V(H_{\ell})=V(J_{\ell})$ for each $\ell\in[1,p]$.  Since
  $J_1,\dots,J_p$ are pairwise vertex-disjoint facial cycles of $H_0$,
  the subgraphs $H_1,\dots,H_p$ are pairwise vertex-disjoint.  Hence
  $H$ is $(g,p,k)$-almost embeddable.

  To complete the proof, we now construct a model
  $\{D_{v,i}:\LexVertex{v}{i}\in V(\lex{G}{k})\}$ of $\lex{G}{k}$ in
  $H$, where $\LexVertex{v}{i}$ is the $i$-th vertex in the $k$-clique
  of $\lex{G}{k}$ corresponding to $v$. Fix an arbitrary total order
  $\preceq$ on $V(G)$.  Consider a vertex $\LexVertex{v}{i}$ of
  $\lex{G}{k}$.
  Say $v$ is in face $F_{\ell}$.  Add the vertex $(v,i)$ of $H_{\ell}$
  to $D_{v,i}$.  For each edge $\LexVertex{v}{i}\LexVertex{w}{j}$ of
  $\lex{G}{k}$ with $v\prec w$, by construction, there is an edge $xy$
  of $H_0$ labeled $(i,j)$, such that $x$ belongs to $v$ and $y$
  belongs to $w$.  Add the vertex $x$ to $D_{v,i}$.  Thus $D_{v,i}$
  induces a connected star subgraph of $H$ consisting of type-(a)
  edges in $H_{\ell}$.  Since every vertex in $J_{\ell}$ is incident
  to at most one labeled edge, $D_{v,i}\cap D_{w,j}=\emptyset$ for
  distinct vertices $\LexVertex{v}{i}$ and $\LexVertex{w}{j}$ of
  $\lex{G}{k}$.

  Consider an edge $\LexVertex{v}{i}\LexVertex{w}{j}$ of $\lex{G}{k}$.
  If $v=w$ then $i\neq j$ and $v$ is in some face $F_{\ell}$, in which
  case a type-(b) edge in $H_{\ell}$ joins the vertex $(v,i)$ in
  $D_{v,i}$ with the vertex $(w,j)$ in $D_{w,j}$.  Otherwise, without
  loss of generality, $v\prec w$ and by construction, there is an edge
  $xy$ of $H_0$ labeled $(i,j)$, such that $x$ belongs to $v$ and $y$
  belongs to $w$.  By construction, $x$ is in $D_{v,i}$ and $y$ is in
  $D_{w,j}$.  In both cases there is an edge of $H$ between $D_{v,i}$
  and $D_{w,j}$.  Hence the $D_{v,i}$ are the branch sets of a
  $\lex{G}{k}$-model in $H$.
\end{proof}



Our first construction employs just one vortex and is based on an
embedding of a complete graph.

\begin{lemma}
  \label{lem:OneVortexConstruction}
  For all integers $g\geq0$ and $k\geq1$, there is an integer $n\geq
  k\sqrt{6g}$ such that $K_n$ is a minor of some $(g,1,k)$-almost
  embeddable graph.
\end{lemma}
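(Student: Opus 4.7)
The plan is to apply Lemma~\ref{lem:ConstructVortex} with $p=1$ to a copy of $K_m$ embedded in a surface of Euler genus at most $g$ so that some face is bounded by a Hamilton cycle. Given such an embedding with $m \geq \sqrt{6g}$, Lemma~\ref{lem:ConstructVortex} supplies a graph $H \in \G(g,1,k)$ containing $\lex{K_m}{k}=K_{mk}$ as a minor, so that $n := mk \geq k\sqrt{6g}$ witnesses the claim.

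To produce the required embedding of $K_m$, I would start from a triangular embedding of $K_{m+1}$ and delete a vertex. By the theorem of Ringel, $K_{m+1}$ admits a triangular embedding in a (possibly nonorientable) surface of Euler genus $(m-2)(m-3)/6$ whenever this quantity is an integer, which occurs precisely when $m \not\equiv 1 \pmod 3$. In such an embedding, pick any vertex $v_0$ and let $v_1,\ldots,v_m$ be its neighbors in their cyclic order at $v_0$. Since every face is a triangle, the $m$ faces incident to $v_0$ are exactly the triangles $v_0 v_i v_{i+1}$ (indices cyclic). Deleting $v_0$ then leaves $G_0 := K_{m+1} - v_0 \cong K_m$ embedded in the same surface, and merges the $m$ triangles around $v_0$ into a single face whose boundary is the Hamilton cycle $v_1 v_2 \cdots v_m v_1$. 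This gives $G_0$ the spanning facial cycle required by Lemma~\ref{lem:ConstructVortex}.

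The only remaining point is to choose an appropriate $m$ for each $g$. For every $g\geq 0$, the choice $m := \lceil \sqrt{6g}\rceil$ (or $\lceil \sqrt{6g}\rceil + 1$ when the former falls in the forbidden residue class $1 \pmod 3$) satisfies both $m \geq \sqrt{6g}$ and $(m-2)(m-3)/6 \leq g$, so that Ringel's triangulation of $K_{m+1}$ lies in a surface of Euler genus at most $g$. This is a direct computation once one handles separately the small cases $g\in\{0,1\}$ (for which $n=1$ or $m=3$ works). I expect this numerology to be the main (if mild) technical point of the proof; once $m$ is fixed, the construction of $G_0$ above and a single application of Lemma~\ref{lem:ConstructVortex} yield a $(g,1,k)$-almost embeddable graph containing $K_{mk}$ as a minor, completing the proof.
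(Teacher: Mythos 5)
Your proof is correct and follows essentially the same route as the paper: use Ringel's map color theorem to find a triangular embedding of a complete graph on roughly $\sqrt{6g}$ vertices in a surface of Euler genus at most $g$, delete one vertex so the neighbourhood link becomes a spanning facial Hamilton cycle, and then feed the result into Lemma~\ref{lem:ConstructVortex} with a single vortex. The only differences are cosmetic — you index from $K_{m+1}$ and delete down to $K_m$ while the paper triangulates $K_m$ and drops to $K_{m-1}$, and you choose $m=\lceil\sqrt{6g}\rceil$ (bumped when $m\equiv 1\pmod 3$) where the paper takes $m_0=\sqrt{6g}+1$ and picks the triangulable $m$ in $[m_0,m_0+2]$.
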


\begin{proof}
  The claim is vacuous if $g=0$. Assume that $g\geq1$.  The map color
  theorem \citep{Ringel74} implies that $K_m$ triangulates some
  surface if and only if $m\bmod{6}\in\{0,1,3,4\}$, in which case the
  surface has Euler genus $\tfrac{1}{6}(m-3)(m-4)$. It follows that
  for every real number $m_0\geq 2$ there is an integer $m$ such that
  $m_0\leq m\leq m_0+2$ and $K_m$ triangulates some surface of Euler
  genus $\tfrac{1}{6}(m-3)(m-4)$.  Apply this result with
  $m_0=\sqrt{6g}+1$ for the given value of $g$. We obtain an integer
  $m$ such that $\sqrt{6g}+1\leq m\leq\sqrt{6g}+3$ and $K_m$
  triangulates a surface $\Sigma$ of Euler genus
  $g':=\tfrac{1}{6}(m-3)(m-4)$. Since $m-4<m-3\leq\sqrt{6g}$, we have
  $g'\leq g$.  Every triangulation has facewidth at least 3. Thus,
  deleting one vertex from the embedding of $K_{m}$ in $\Sigma$ gives
  an embedding of $K_{m-1}$ in $\Sigma$, such that some facial cycle
  contains every vertex.  Let $n:=(m-1)k\geq k\sqrt{6g}$.
  Lemma~\ref{lem:ConstructVortex} implies that $\lex{K_{m-1}}{k}\cong
  K_n$ is a minor of some $(g',1, k)$-almost embeddable graph.
\end{proof}


Now we give a construction based on grids.  Let $L_n$ be the $n\times
n$ planar grid graph.  This graph has vertex set $[1,n]\times[1,n]$
and edge set $\{(x,y)(x',y'):|x-x'|+|y-y'|=1\}$.  The following lemma
is well known; see \citep{Wood-ProductMinor}.

\begin{lemma}
  \label{lem:MinorInGridProduct}
  $K_{nk}$ is a minor of $\lex{L_n}{2k}$ for all $k\geq1$.
\end{lemma}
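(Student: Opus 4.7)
The plan is to build a $K_{nk}$-model in $\lex{L_n}{2k}$ explicitly. I will view $\lex{L_n}{2k}$ as $2k$ ``layers'' of $L_n$ stacked together, where the $\ell$-th copy of a grid vertex $(a,b)$ is denoted $\LexVertex{(a,b)}{\ell}$; by definition of the lexicographic product, any two copies of the same grid vertex are adjacent (they form a clique $K_{2k}$), and any two copies of adjacent grid vertices are also adjacent.

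The construction uses two layers per index. For each $i\in[1,n]$ and $\ell\in[1,k]$, I will set
\[
B_{i,\ell} \;=\; \{\LexVertex{(i,c)}{2\ell-1}:c\in[1,n]\} \;\cup\; \{\LexVertex{(r,i)}{2\ell}:r\in[1,n]\},
\]
that is, row $i$ sitting inside layer $2\ell-1$ together with column $i$ sitting inside layer $2\ell$. This gives $nk$ candidate branch sets, each an ``L-shape'' split across two layers.

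I then plan to verify three properties in turn. First, connectedness: the row and the column are each induced paths, and they are joined through the edge between $\LexVertex{(i,i)}{2\ell-1}$ and $\LexVertex{(i,i)}{2\ell}$ (different copies of the same grid vertex $(i,i)$). Second, pairwise vertex-disjointness: two sets with $\ell\ne\ell'$ use the disjoint layer pairs $\{2\ell-1,2\ell\}$ and $\{2\ell'-1,2\ell'\}$, while two sets with the same $\ell$ but $i\ne i'$ live in disjoint rows within layer $2\ell-1$ and disjoint columns within layer $2\ell$. Third, pairwise touching: given any two distinct pairs $(i,\ell)\ne(i',\ell')$, I consider the grid cell $(i,i')$; its copy $\LexVertex{(i,i')}{2\ell-1}$ lies in $B_{i,\ell}$ (through row $i$ of layer $2\ell-1$), and its copy $\LexVertex{(i,i')}{2\ell'}$ lies in $B_{i',\ell'}$ (through column $i'$ of layer $2\ell'$). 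These two vertices are both copies of the same grid vertex, hence adjacent in $\lex{L_n}{2k}$, so the two branch sets touch.

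There is no real obstacle: the proof is a direct explicit construction, and the only subtlety is the staggering of layers so that each branch set contributes ``row material'' in an odd layer and ``column material'' in an even layer. Staggering is precisely what makes the meeting vertex $(i,i')$ sit in different layers in the two branch sets it is supposed to witness, which in turn is what makes the factor $2k$ (rather than $k$) in the statement necessary for this approach.
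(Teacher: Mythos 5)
Your construction is exactly the paper's: the branch set $B_{i,\ell}$ consisting of row $i$ in layer $2\ell-1$ together with column $i$ in layer $2\ell$ is the paper's $B_{x,z}=\{(x,y,2z-1),(y,x,2z):y\in[1,n]\}$, and the witnessing adjacency via the two copies of grid vertex $(i,i')$ in layers $2\ell-1$ and $2\ell'$ (always distinct by parity) is also the same. Your write-up just spells out the connectedness and disjointness checks that the paper leaves implicit.
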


\begin{proof}
  For $x,y\in[1,n]$ and $z\in[1,2k]$, let $(x,y,z)$ be the $z$-th
  vertex in the $2k$-clique corresponding to the vertex $(x,y)$ in
  $\lex{L_n}{2k}$.  For $x\in[1,n]$ and $z\in[1,k]$, let $B_{x,z}$ be
  the subgraph of $\lex{L_n}{2k}$ induced by
  $\{(x,y,2z-1),(y,x,2z):y\in[1,n]\}$.  Clearly $B_{x,z}$ is
  connected. For all $x,x'\in[1,n]$ and $z,z'\in[1,k]$ with
  $(x,z)\neq(x',z')$, the subgraphs $B_{x,z}$ and $B_{x',z'}$ are
  disjoint, and the vertex $(x,x',2z-1)$ in $B_{x,z}$ is adjacent to
  the vertex $(x,x',2z')$ in $B_{x',z'}$.  Thus the $B_{x,z}$ are the
  branch sets of a $K_{nk}$-minor in $\lex{L_n}{2k}$.
\end{proof}




\begin{lemma}
  \label{lem:ManyVortexConstruction}
  For all integers $k\geq2$ and $p\geq1$,
  there is an integer $n\geq\frac{2}{3\sqrt{3}}k\sqrt{p}$, such that
  $K_n$ is a minor of some $(0,p,k)$-almost embeddable graph.
\end{lemma}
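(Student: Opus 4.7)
The plan is to combine Lemma~\ref{lem:ConstructVortex} with Lemma~\ref{lem:MinorInGridProduct}. Lemma~\ref{lem:ConstructVortex} turns any planar graph $G$ whose vertex set is partitioned into $p$ pairwise vertex-disjoint facial cycles into a graph in $\G(0,p,k)$ containing $\lex{G}{k}$ as a minor, while Lemma~\ref{lem:MinorInGridProduct} exhibits a large complete minor in $\lex{L_m}{2j}$. So I need a planar $G$ admitting both a large grid minor and such a facial partition.

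The key observation is that, for every even $m$, the grid graph $L_m$ itself is such a $G$. Concretely, I set $m := 2\lfloor\sqrt{p}\rfloor$ and take $G := L_m$ in its standard planar embedding. The $(m/2)^2$ unit-square faces with bottom-left corners at $(2i-1,2j-1)$ for $i,j\in[1,m/2]$ have vertex sets $\{2i-1,2i\}\times\{2j-1,2j\}$, which partition $V(L_m)=[1,m]^2$ into pairwise vertex-disjoint facial $4$-cycles. Since the number $\lfloor\sqrt{p}\rfloor^2$ of these faces is at most $p$, Lemma~\ref{lem:ConstructVortex} produces a graph $H\in\G(0,p,k)$ containing $\lex{L_m}{k}$ as a minor.

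Because $k\geq 2$, the graph $\lex{L_m}{2\lfloor k/2\rfloor}$ is an induced subgraph of $\lex{L_m}{k}$, so by Lemma~\ref{lem:MinorInGridProduct} the complete graph $K_{m\lfloor k/2\rfloor}$ is a minor of $\lex{L_m}{k}$, and hence also of $H$. Setting $n:=2\lfloor\sqrt{p}\rfloor\lfloor k/2\rfloor$, it remains to verify $n\geq\frac{2k\sqrt{p}}{3\sqrt{3}}$. Two elementary inequalities suffice: $\lfloor\sqrt{p}\rfloor\geq\sqrt{p}/\sqrt{3}$ for every $p\geq 1$ (equivalent to $3\lfloor\sqrt{p}\rfloor^2\geq p$, worst case $p=3$) and $\lfloor k/2\rfloor\geq k/3$ for every $k\geq 2$ (worst case $k=3$); multiplying them gives the required bound, with equality exactly at $p=k=3$. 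The one nontrivial step is spotting the grid tiling; everything else is a routine composition of the earlier lemmas.
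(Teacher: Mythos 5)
Your proof is correct and follows essentially the same route as the paper's: tile $L_{2\lfloor\sqrt{p}\rfloor}$ by its $\lfloor\sqrt{p}\rfloor^2$ unit faces, feed this into Lemma~\ref{lem:ConstructVortex}, and then extract $K_{2\lfloor\sqrt{p}\rfloor\lfloor k/2\rfloor}$ via Lemma~\ref{lem:MinorInGridProduct}, finishing with the same two floor-function inequalities $\lfloor\sqrt{p}\rfloor\geq\sqrt{p/3}$ and $\lfloor k/2\rfloor\geq k/3$. The only cosmetic difference is that you apply \ref{lem:ConstructVortex} with width $k$ and then pass to the subgraph $\lex{L_m}{2\lfloor k/2\rfloor}$, while the paper applies it directly with width $2\lfloor k/2\rfloor$; the two orderings are interchangeable.
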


\begin{proof}
  Let $m:=\floor{\sqrt{p}}$ and $\ell:=\floor{\frac{k}{2}}$.
  Let
  $n:=2m\ell\geq2\cdot\sqrt{\frac{p}{3}}\cdot\frac{k}{3}=\frac{2}{3\sqrt{3}}k\sqrt{p}$.
  For $x,y\in[1,m]$, let $F_{x,y}$ be the face of $L_{2m}$ with vertex
  set $\{(2x-1,2y-1),(2x,2y-1),(2x,2y),(2x-1,2y)\}$.  There are $m^2$
  such faces, and every vertex of $L_{2m}$ is in exactly one such
  face.  By Lemma~\ref{lem:MinorInGridProduct}, $K_n$ is a minor of
  $\lex{L_{2m}}{2\ell}$.  Since $L_{2m}$ is planar, by
  Lemma~\ref{lem:ConstructVortex}, $K_n$ is a minor of some
  $(0,m^2,2\ell)$-almost embeddable graph.  The result follows since
  $p\geq m^2$ and $k\geq 2\ell$.
\end{proof}





The following theorem summarizes our constructions of almost
embeddable graphs containing large complete graph minors.

\begin{theorem}
  \label{thm:CombinedConstructions}
  For all integers $g\geq 0$ and $p\geq1$ and $k\geq2$, there is an
  integer $n\geq \frac{1}{4} k\sqrt{p+g}$, such that $K_n$ is a minor
  of some $(g,p,k)$-almost embeddable graph.
\end{theorem}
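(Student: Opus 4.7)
The plan is to combine Lemmas~\ref{lem:OneVortexConstruction} and~\ref{lem:ManyVortexConstruction} through a case analysis, using either the complete-graph-triangulation construction when the genus $g$ dominates, or the grid construction when the number of vortices $p$ dominates. The first thing to observe is a monotonicity property of the classes $\G(g,p,k)$: any $(g',p',k)$-almost embeddable graph with $g'\leq g$ and $p'\leq p$ is also $(g,p,k)$-almost embeddable, because the definition allows $q\leq p$ vortices (not exactly $p$) and an embedding in a surface of Euler genus $g'$ can be regarded as an embedding in a surface of Euler genus $g$ (attach $g-g'$ handles or crosscaps in a face disjoint from the graph).

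\emph{Case 1: $g\geq p$.}\ Apply Lemma~\ref{lem:OneVortexConstruction} with the given $g$ and $k$ to obtain an integer $n\geq k\sqrt{6g}$ such that $K_n$ is a minor of some $(g,1,k)$-almost embeddable graph. By monotonicity in the vortex parameter, this graph is also $(g,p,k)$-almost embeddable. Since $p\leq g$, we have $p+g\leq 2g$, hence
\[
k\sqrt{6g}\;\geq\; k\sqrt{3(p+g)}\;=\;\sqrt{3}\,k\sqrt{p+g}\;\geq\;\tfrac{1}{4}k\sqrt{p+g}.
\]

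\emph{Case 2: $g<p$.}\ Apply Lemma~\ref{lem:ManyVortexConstruction} with the given $p$ and $k$ (using $k\geq 2$) to obtain an integer $n\geq \frac{2}{3\sqrt{3}}k\sqrt{p}$ such that $K_n$ is a minor of some $(0,p,k)$-almost embeddable graph. By monotonicity in the genus parameter, this graph is also $(g,p,k)$-almost embeddable. Since $g<p$, we have $p+g\leq 2p$, hence
\[
\tfrac{2}{3\sqrt{3}}k\sqrt{p}\;\geq\;\tfrac{2}{3\sqrt{3}}\cdot\tfrac{k\sqrt{p+g}}{\sqrt{2}}\;=\;\tfrac{2}{3\sqrt{6}}\,k\sqrt{p+g}\;\geq\;\tfrac{1}{4}k\sqrt{p+g},
\]
where the last inequality holds because $\frac{2}{3\sqrt{6}}>\frac{1}{4}$.

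There is no real obstacle here beyond checking that the constants from the two lemmas are both at least $\frac{1}{4}\sqrt{2}$ after distributing them over the larger of $g$ and $p$; this is why $\frac{1}{4}$ was chosen as the constant in the statement. In both cases we have produced a $(g,p,k)$-almost embeddable graph containing $K_n$ as a minor with $n\geq \frac{1}{4}k\sqrt{p+g}$, which is what was required.
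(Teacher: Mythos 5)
Your proof is correct and takes essentially the same approach as the paper's: the same case split on whether $g\geq p$ or $g<p$, invoking Lemma~\ref{lem:OneVortexConstruction} in the first case and Lemma~\ref{lem:ManyVortexConstruction} in the second, with the same arithmetic to absorb the factor of two into the constant $\tfrac14$. The only cosmetic difference is that you spell out the monotonicity of $\G(g,p,k)$ in $g$ and $p$, which is in fact immediate from the definition (it already says ``Euler genus at most $g$'' and ``$q\leq p$ vortices''), so the extra remark about attaching handles is harmless but unnecessary.
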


\begin{proof}
  First suppose that $g\geq p$. By
  Lemma~\ref{lem:OneVortexConstruction}, there is an integer $n\geq
  k\sqrt{6g}$, such that $K_n$ is a minor of some $(g,1,k)$-almost
  embeddable graph, which is also $(g,p,k)$-embeddable (since
  $p\geq1$). Since $n\geq k\sqrt{3p+3g}> \frac{1}{4}k\sqrt{p+g}$, we
  are done.
 
  Now assume that $p>g$. By Lemma~\ref{lem:ManyVortexConstruction},
  there is an integer $n\geq\frac{2}{3\sqrt{3}}k\sqrt{p}$, such that
  $K_n$ is a minor of some $(0,p,k)$-almost embeddable graph, which is
  also $(g,p,k)$-embeddable (since $g\geq0$). Since $n
  \geq\frac{2}{3\sqrt{3}}k\sqrt{\frac{g}{2}+\frac{p}{2}}
  =\frac{\sqrt{2}}{3\sqrt{3}}k\sqrt{g+p} >\frac{1}{4}k\sqrt{g+p}$, we
  are done.
\end{proof}

Adding $a$ dominant vertices to a graph increases its Hadwiger number
by $a$. Thus Theorem~\ref{thm:CombinedConstructions} implies:

\begin{theorem}
  \label{thm:ApexConstruction}
  For all integers $g,a\geq 0$ and $p\geq1$ and $k\geq2$, there is an
  integer $n\geq a+\frac{1}{4}k\sqrt{p+g}$, such that $K_n$ is a minor
  of some graph in $\G(g,p,k,a)$.
\end{theorem}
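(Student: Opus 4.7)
The plan is to directly bootstrap Theorem~\ref{thm:CombinedConstructions} by adding $a$ dominant (apex) vertices to its construction. Concretely, I would start by invoking Theorem~\ref{thm:CombinedConstructions} with the given $g,p,k$ to obtain an integer $m \geq \tfrac{1}{4} k\sqrt{p+g}$ and a $(g,p,k)$-almost embeddable graph $H$ such that $K_m$ is a minor of $H$.

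Next, I would form a new graph $H'$ by adding $a$ new vertices $A = \{a_1, \dots, a_a\}$ to $H$ and making each $a_i$ adjacent to every vertex of $H$ (and, optionally, to the other $a_j$'s). Since $H' \setminus A = H \in \G(g,p,k)$ by construction, we have $H' \in \G(g,p,k,a)$, with the vertices in $A$ serving as the apex set. This step is essentially a definitional check.

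Now I would exhibit a $K_{m+a}$-minor in $H'$. Let $\{S_1, \dots, S_m\}$ be the branch sets of a $K_m$-model in $H$, which remain valid branch sets in $H'$. Augment this collection with the singleton branch sets $\{a_1\}, \dots, \{a_a\}$. Each $a_i$ is adjacent to every vertex of $H$, hence to some vertex of every $S_j$ and to every $a_j$ with $j \neq i$; so the $m+a$ branch sets are pairwise touching, giving a $K_{m+a}$-model in $H'$. Setting $n := m+a \geq a + \tfrac{1}{4} k \sqrt{p+g}$ completes the argument.

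There is no substantive obstacle here; the statement is a short corollary, and the only things to verify are the two trivial facts noted in the text preceding the theorem, namely that $H' \in \G(g,p,k,a)$ and that adding $a$ dominant vertices raises the Hadwiger number by exactly $a$.
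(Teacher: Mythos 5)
Your proof is correct and matches the paper's approach exactly: the paper also obtains Theorem~\ref{thm:ApexConstruction} from Theorem~\ref{thm:CombinedConstructions} by adding $a$ dominant apex vertices, observing that this raises the Hadwiger number by $a$ while keeping the graph in $\G(g,p,k,a)$. You have simply written out in full the one-line argument the paper gives before the theorem statement.
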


Corollary~\ref{cor:CliqueMinor} and Theorem~\ref{thm:ApexConstruction}
together prove Theorem~\ref{thm:Main}.

\subsubsection*{Acknowledgments} Thanks to Bojan Mohar for instructive
conversations.


\end{document}